\def\BState{\State\hskip-\ALG@thistlm}
\renewcommand{\labelenumi}{\rm{(\roman{enumi})}}
\tikzstyle{vertex}=[circle, draw, inner sep=2pt, fill=white]%, minimum size=6pt] 
\newtheorem{thm}{Theorem}[section]
\newtheorem{cor}[thm]{Corollary}
\newtheorem{lem}[thm]{Lemma}
\newtheorem{prop}[thm]{Proposition}
\theoremstyle{definition}
\newtheorem{defn}[thm]{Definition}
\newtheorem{ass}[thm]{Assumption}
\newtheorem{rem}[thm]{Remark}
\newtheorem{exa}[thm]{Example}
\numberwithin{equation}{section}
\numberwithin{thm}{section}
\newcommand{\R}{\mathbb{R}}
\newcommand{\XX}{\mathbb{X}}
\renewcommand{\SS}{\mathbb{S}}
\newcommand{\YY}{\mathbb{Y}}
\DeclareMathOperator*{\motimes}{\text{\raisebox{0.25ex}{\scalebox{0.7}{$\bigotimes$}}}}
\newcommand{\dbra}[1]{[\kern-0.15em[ #1 ]\kern-0.15em]}
\newcommand{\dbraco}[1]{[\kern-0.15em[ #1 [\kern-0.15em[}
\newcommand{\dbraoc}[1]{]\kern-0.15em] #1 ]\kern-0.15em]}
\newcommand{\dbraoo}[1]{]\kern-0.15em] #1 [\kern-0.15em[}
\newcommand{\Ecal}{\mathcal{E}}
\newcommand{\Fcal}{\mathcal{F}}
\newcommand{\Pcal}{\mathcal{P}}
\newcommand{\Xcal}{\mathcal{X}}
\newcommand{\Ycal}{\mathcal{Y}}
\newcommand{\FF}{\mathbb{F}}
\newcommand{\RR}{\mathbb{R}}
\newcommand{\NN}{\mathbb{N}}
\DeclareMathOperator{\argmin}{argmin}
\newcommand{\LeftEqNo}{\let\veqno\@@leqno}
\begin{document}

\title{Cournot-Nash equilibrium and optimal transport in a dynamic setting}
\author{
	Beatrice Acciaio\thanks{Department of Mathematics. ETH Zurich, Switzerland.}
	\and
	Julio Backhoff-Veraguas\thanks{ Department of Applied Mathematics. University of Twente, The Netherlands.}
	\and Junchao Jia\thanks{Department of Statistics. London School of Economics, UK.}
}

\date{\today}

\maketitle

\abstract{
We consider a large population dynamic game in discrete time. The peculiarity of the game is that players are characterized by time-evolving types, and so reasonably their actions should not anticipate the future values of their types. When interactions between players are of mean-field kind, we relate Nash equilibria for such games to an asymptotic notion of dynamic Cournot-Nash equilibria. Inspired by the works of Blanchet and Carlier~\cite{BC16} for the static situation, we interpret dynamic Cournot-Nash equilibria in the light of causal optimal transport theory. Further specializing to games of potential type, we establish existence, uniqueness and characterization of equilibria. Moreover we develop, for the first time, a numerical scheme for causal optimal transport, which is then leveraged in order to compute dynamic Cournot-Nash equilibria. This is illustrated in a detailed case study of a congestion game. 
}

\section{Introduction}
We consider a discrete-time dynamic game played by $N$ agents and study its behaviour as $N$ goes to infinity.
Agents take actions in time in order to minimize a cost function that depends on the whole population of players in a mean-field fashion. Agents may have different characteristics or preferences, which define their own ``type" and may change (progressively) in time. The types play a crucial role in the choice of an agent's dynamic actions, since the latter can only rely on the partial knowledge of types to date. Let us illustrate this with an example.

\begin{exa}\label{ex_intro}
Agents represent different delivery services. Each of these  must visit a number of sites in a given order (for instance, supermarkets or costumers). The decision of each agent is whether to take the quick or the slow road in between sites. The mean-field character of the situation arises because of congestion on the roads, which is caused by the population of agents. At each site an agent collects a parcel, to be delivered to the next site. Parcels may be tagged as ``express" or ``normal", and there is a penalty for the slow delivery of an express parcel. The type of the agent is the sequence of parcel tags. In the dynamic game we consider, an agent may solely base her actions (ie.\ taking quick or slow road) on the observed sequence of parcel tags, but is uninformed about the tags of future parcels. By contrast, in a static game, the full sequence of parcel tags is available from the start of the game. In Section~\ref{sect.ex} we illustrate how this leads to a different behaviour of the agents when we compare the dynamic and static games. 
\end{exa}

We study Nash equilibria for such dynamic games by considering an asymptotic formulation of the problem (namely where we think of a continuum of infinitely small players), whose solutions we refer to as \emph{dynamic Cournot-Nash equilibria}. The relevance of this problem for the finite population game is justified by limiting results that we prove under suitable assumptions; see Propositions \ref{Prop eNash} and \ref{Prop conv eq}.

The first main contribution of the present paper is an equivalent reformulation of the asymptotic problem in terms of optimal transport. In doing so we are strongly
inspired by the work of Blanchet and Carlier \cite{BC16} for static games. Crucially, in our dynamic setting actions take place in time and cannot anticipate the evolution of agents' types. This imposes a constraint in the optimal transport setting, which is known in the literature under the name of \emph{causality}, c.f.\ \cite{BBLZ}. 

Our second main contribution is to establish existence, and a characterization, of dynamic Cournot-Nash equilibria for the prominent subclass of \emph{potential games}. This for example allows us to include congestion effects in the cost function. Games of this type are particularly tractable since, as it is well-known, they can be recast as variational problems. Specifically, the search for dynamic Cournot-Nash equilibria boils down to solving optimization problems over causal couplings. 

Finally, we provide an algorithm to compute optimal causal couplings, which is the first numerical method for causal optimal transport and therefore a result of its own interest. In the present context, this allows us to compute dynamic Cournot-Nash equilibria, as well as cooperative equilibria and the Price of Anarchy in the asymptotic formulation. Thanks to our convergence results, this leads to the computation of approximate equilibria for the $N$-player game. Given its generality, we expect the algorithm to be used in other applications of causal optimal transport, e.g.\ in implementing the discretization scheme proposed in \cite{ABC19} or in the machine learning context currently under development in \cite{AMX}.

For illustration, we implement the aforementioned algorithm in a toy model, akin to Example \ref{ex_intro}. This model is used to further highlight the difference between the notions of dynamic and static Cournot-Nash equilibria.

\subsection{Related literature}

{ 

From the mathematical perspective, our formulation is closely related to mean field games (MFG) in a discrete-time setting (see e.g.\ \cite{GMS10}). For this parallel, the different types of agents considered in our setup correspond to different subpopulations of players in the MFG (see Remark \ref{rem:subpopu} below for more details). 
The theory of mean field games aims at studying dynamic games as the number of agents tends to infinity. It was established independently in the mathematical community by Lasry and Lions \cite{lasry2006jeux,LL07}, and in the engineering community by Huang, Malham\'e and Caines \cite{CHM06,huang2007large}, 
and has since seen a burst in activity, as e.g.\ documented in the recent monograph by Carmona and Delarue \cite{CD18}. See also Cardialaguet's \cite{Car10}, based on P.L. Lions' lectures at Coll\'ege de France, for seminal results on mean field games. 
The key assumption is that players are symmetric and rational, and the idea is to approximate large $N$-player systems by studying the behaviour as $N \rightarrow \infty$.

Various efforts have been made to rigorously prove that, when the number of players tends to infinity, the original $N$-player system indeed converges to the MFG limit. In the original paper \cite{LL07}, the authors proved such a result for the stationary case, while convergence is established for dynamical MFG in other specific cases by e.g. \cite{bardi2012explicit} for the linear quadratic case and \cite{ferreira2014convergence,cecchin2020probabilistic} for finite state MFG, among others.
Several papers were devoted to deal with more general cases based on different notions of convergence, see e.g.\ \cite{cardaliaguet2015master,L16,nutz2020convergence,cecchin2019convergence}. 
Given that MFG assumes the players to be competitive, the Nash equilibria may not result in optimal social cost. This leads to the discussion of (in)efficiency of the equilibria and originates the concept of Price of Anarchy (PoA). \cite{graber2016linear} studied the inefficiency of Nash equilibria for the linear quadratic case, and \cite{carmona2019price} defined and analyzed the PoA for MFG systems. 
See also \cite{BZ19} for a thorough analysis of the price of anarchy in a continuous-time terminal ranking game with non-local mean field effect, and a connection to the so-called Schr\"odinger problem.

The articles which are the closest to ours are the works by Blanchet and Carlier \cite{BC14a,BC16} where, building on the seminal contribution of Mas-Colell \cite{M84}, a connection between static Cournot-Nash equilibria and optimal transport is developed. From a probabilistic perspective, large static anonymous games have been recently studied by Lacker and Ramanan in \cite{LR19}, with an emphasis on large deviations and the asymptotic behaviour of the Price of Anarchy. We also refer to this paper for a thorough review on the (vast) game theoretic literature. The crucial difference between the above papers and the present one is the \emph{dynamic} nature of the game we consider here. 
It is also worth mentioning that \cite{benamou2017variational} also considers a variational formulation to study competitive games with mean field effect. Similar to our paper, the cost is separable and of potential type, and this leads to a formulation in terms of an optimal transport cost plus an energy functional. A possible extension is provided in \cite{BC14b} where also the non-potential and non-separable cases are considered: It could be of interest to develop in the future a corresponding picture in the dynamic setting.
 
To deal with our dynamic setting, we use tools from causal optimal transport (COT) rather than classical optimal transport. In a nutshell, COT is a relative of the optimal transport problem where an extra constraint, which takes into account the arrow of time (filtrations), is added. This in turn is crucial to ensure, in our application, the adaptedness of players' actions to their types in a dynamic framework.
The theory of COT, used to reformulate our asymptotic equilibrium problem, has been developed in the works \cite{La18,BBLZ}. This theory has been successfully employed in various applications, e.g.\ in mathematical finance and stochastic analysis \cite{ABZ,ABC19}, in operations research \cite{Pflug,PflugPichler,PflugPichlerbook}, and in machine learning \cite{AMX}. The novel numerical method developed in the present article can therefore be used to approach a variety of problems, including for example the value of information in dynamic control problems \cite{ABZ}, stability of superhedging and utility maximization 
\cite{BBBE19}, and McKean-Vlasov optimization problems \cite{ABC19}.

A series of numerical methods have been proposed for MFG to help gaining a better understanding of equilibrium, see e.g.\  \cite{achdou2010mean,achdou2020mean,almulla2017two,benamou2017variational,gomes2014socio}.  
On the other hand, numerics for a symmetrized version of causal optimal transport, the so-called bicausal transport, have been developed by Pflug and Pichler in \cite{Pflug,PflugPichler,PflugPichlerbook}, who refer to it as nested distance. The method used in those papers, that relies on a dynamic programming principle, seems to be ill-suited for the causal transport problem (cf.\ \cite{An17}). To the best of our knowledge, the present paper provides the first numerical method for causal optimal transport. For this, we leverage on the regularization approach developed by Cuturi \cite{cuturi2013sinkhorn} in the classical optimal transport framework; see also \cite{2018-Peyre-computational-ot}.

}

\section{The N-player game and its associated Cournot-Nash limit}\label{sect.npl}
Let $\XX$ and $\YY$ be two Polish spaces equipped with their respective sigma-algebras, and $T\in\NN$ a fixed time horizon. We consider a population of $N$ players indexed by $i\in\{1,\dots,N\}$. 

At every time $t\in\{1,\cdots,T\}$, player $i$ is characterized by its own type, which is an element of $\XX$, and is denoted by $x^{i}_t$. 
We write $$x_t^\bullet:= (x^{1}_t,\cdots,x^{N}_t) $$ for the vector collecting the types of the population of players at time $t$, and similarly $$x^i_\bullet:=(x_1^{i},\cdots,x_T^{i})$$ for the vector collecting the type-path of  player $i$. 
Accordingly, $\XX^{N\times T}$ is the space of all possible type paths. We denote by $X$ a typical element of $\XX^{N\times T}$, which we may represent as either  $$X=( x^\bullet_1,\cdots, x^\bullet_T)\,\, \text{ or }\,\,X=(x_\bullet^1,\dots,x_\bullet^N ),$$ as context will reveal. Finally, in situations where the number of players $N$ is varying, we will write $x_t^{N,i}$, $x_\bullet^{N,i}$, $x_t^{N,\bullet}$, and $X^N$, to stress the dependence on $N$. Otherwise we will drop the dependence on $N$ for most of the objects yet to be introduced.

From the beginning of the game, type distributions are known for all players, and we denote them by $\eta^{N,i}\in\Pcal(\XX^T)$. We so define the associated product measure $$\eta^N:=\motimes_{i\leq N}\eta^{N,i}\in \Pcal(\XX^{N\times T}).$$ On the other hand, at time $t$ the collection of players' time-$t$ types is publicly known: We thus set $$\Fcal_t:=\sigma( x^\bullet_s\,:\, s\leq t).$$ 
 
We now describe the actions that the players are to decide. At every time $t$, each agent $i$ needs to choose an action from $\YY$. This choice may only depend on the available information at time $t$, and is therefore defined through an $\Fcal_t$-measurable function $$y^{i}_t: (\XX^{N\times T},\Fcal_t) \to \YY.$$ We define $y_\bullet^{i}$, $y_t^{\bullet}$, and $Y$, with the same conventions as before. We henceforth call \emph{pure strategy} an element $$Y=( y^\bullet_1,\cdots, y^\bullet_T)\,\, \text{ or }\,\,Y=(y_\bullet^1,\dots,y_\bullet^N ).$$

We consider a cost function
\begin{equation}\label{eq cost}
F:\XX^T\times\YY^T\times\Pcal(\YY^T)\to \RR.
\end{equation}
This captures the fact that each player faces a cost which depends on its own type (eg.\ $x^i_\bullet$) and action (eg.\ $y^i_\bullet(X)$), and on the distribution of actions of all other players (anonymous game). 
For simplicity, we assume throughout that $F$ is measurable and lower-bounded, so all integrals that we will considered are well-defined.

From the beginning of the game, the types distributions $\eta^{N,i}, i\leq N$, are known, thus each player needs to average over all possible evolutions of types. Therefore, for every pure strategy $Y$, the cost faced by player $i$ is
\begin{align*}
J^i\left(y^{i}_\bullet,y^{-i}_\bullet \right):=\int_{\XX^{N\times T}} F\left(x^{i}_\bullet,y^{i}_\bullet(X),\textstyle{\frac{1}{N-1}\sum_{k\neq i} \delta_{y^{k}_\bullet(X)}} \right)\, \eta^N(dX),
\end{align*}
{where we use $y^{-i}_\bullet$ to denote $\{y^{k}_\bullet\}_{k\neq i}$.}

\begin{defn}[Pure Nash equilibrium]
Strategy $Y$ is a \emph{pure Nash equilibrium} for the $N$-player game if\footnote{The r.h.s.\ corresponds to replacing the action $y^i_\bullet$ by $A$ for player $i$, leaving all other players unmodified.}
\begin{align}
J^i\left(y^{i}_\bullet,y^{-i}_\bullet \right) \leq J^i\left(A,y^{-i}_\bullet \right),
\end{align}
for all $i\leq N$ and all $\Fcal$-adapted $\YY$-valued processes $A$.
\end{defn}
As pure equilibria rarely exist, a typical approach is to consider randomized strategies: the choice of action at time $t$ depends on types only up to time $t$ (``non-anticipativity"), but there may be also other factors influencing the game (eg.\ independent randomization), thus $\Fcal$-adaptedness may fail: 
A measurable function $Z:\XX^{N\times T}\to\Pcal(\YY^{N\times T})$ is called \emph{mixed strategy} if, for all $t\leq T$ and all bounded Borel functions $f:\YY^{N\times t}\to \RR$, the map
$$
\XX^{N\times T}\ni X\mapsto\int_{\YY^{N\times T}} f\left(\left\{y^{\bullet}_s \right\}_{s\leq t}\right)\ Z(X)(dY)
$$ 
is $\Fcal_t$-measurable.\footnote{In plain language: A mixed strategy assigns an `action distribution' to each type-path $X$. The $X$-dependence of such an `action distribution' is assumed to be $\Fcal$-adapted.} Note that this is equivalent to the conditional independence $ y^\bullet_t \perp  x^\bullet_u$ given $\Fcal_t$, for all $u>t$, under the measure 
\begin{align}\label{eq def M}
M(dX,dY):=Z(X)(dY)\eta^N(dX)\in\Pcal(\YY^{N\times T}\times\XX^{N\times T}).
\end{align}
The cost faced by player $i$ when following a mixed strategy $Z$ is then given by
\begin{equation}\label{eq Li}
L^i (Z):=\int_{\YY^{N\times T}\times\XX^{N\times T}} F\left(x^{i}_\bullet,y^{i}_\bullet,\textstyle{\frac{1}{N-1}\sum_{k\neq i} \delta_{y^{k}_\bullet}}\right)\, 
M(dX,dY).
\end{equation}
In what follows we write $Z(dY)$ rather than $Z(X)(dY)$, if context is clear.

\begin{defn}[Mixed Nash equilibrium]\label{def.mn}
A mixed strategy $Z$ is called a \emph{mixed Nash equilibrium} for the $N$-player game if
$$ L^i(Z)\leq L^i(\widetilde Z),$$
for all $i\leq N$ and all mixed strategies $\widetilde Z$ satisfying:
\begin{align}\label{eq mix Nash}
& f:\YY^{(N-1)\times T}\to \RR \,\,\text{ bounded measurable}\\ \notag \Longrightarrow & \int_{\YY^{N\times T}} f\left(y^{-i}_\bullet\right)  Z(X)(dY)\stackrel{\eta^N-as.}{=}\int_{\YY^{N\times T}} f\left(y^{-i}_\bullet\right)  \widetilde Z(X)(dY). 
\end{align}
\end{defn}

We note that Condition \eqref{eq mix Nash} can be rephrased as: If $M$ and $\tilde M$ are associated to $Z$ and $\tilde Z$ as in \eqref{eq def M}, then the joint distribution of $\{X,y^{-i}_\bullet\}$ is the same under either $M$ or $\tilde M$.

Finding or characterizing equilibria for the $N$-player game is an extremely hard task, even when existence can be proved. Hence rather than directly studying (mixed) Nash equilibria, we will formulate an alternative equilibrium problem for a ``representative player". Proposition~\ref{Prop eNash} below shows that this provides a way to approximate equilibria for large (but finite) systems of players. The following definition translates the non-anticipativity of mixed-strategies to the asymptotic framework where essentially we deal with a continuum of identical players.\\ 

{ \noindent {\bf Notation:} For a probability measure $\pi\in\Pcal(\XX^T\times\YY^T)$, with $\XX^T$-marginal (resp. $\YY^T$-marginal) of $\pi$ we mean the distribution of the projection of $\pi$ onto $\XX^T$ (resp. $\YY^T$). We also denote by $\pi(dy_t|x_1,\dots,x_s)$ the conditional law of the $y_t$ coordinate given the $x_1,\dots,x_s$ coordinates under $\pi$. }

\begin{defn}\label{def causal}
A probability measure $\pi\in\Pcal(\XX^T\times\YY^T)$ is called \emph{causal} if
\begin{equation}\label{eq causality}
\pi(dy_t|x_1,\cdots,x_T)\stackrel{as.}{=}\pi(dy_t|x_1,\cdots,x_t),\,\, \text{for all $t=1,\dots,T-1$}.
\end{equation}
If $\pi$ has $\XX^T$-marginal $\mu$ and $\YY^T$-marginal $\nu$, it is called a causal (Kantorovich) transport from $\mu$ into $\nu$.
Furthermore, $\pi$ is called \emph{pure (or a Monge transport)} if, for all $t$, $y_t=g_t(x_1,\cdots,x_t)$ $\pi$-a.s. for some measurable function { $g_t:\XX^t\to\YY$}.
\end{defn}
The expression in \eqref{eq causality} means that, at every time $t$, given $x_1,\cdots,x_t$ (``history of $x$ up to time $t$"), the conditional distribution of $y_t$ under $\pi$ is independent of $x_s$ for all $s>t$ (``future of $x$"). In other words, the amount of mass transported by $\pi$ into a subset of $\YY^t$ depends on the source space $\XX^T$ only up to time $t$. {In probabilistic language, a pure causal plan is the joint law of a pair of processes $(X,Y)$ where $Y$ is adapted to the information of $X$, and in a sense that can be made precise, general causal plans are combinations (mixtures) of pure plans.}

\begin{defn}[Cournot-Nash equilibrium]\label{def:CN}Given $\eta\in\Pcal(\XX^T)$, a causal transport $\widehat\pi\in\Pcal(\XX^T\times\YY^T)$ is called \emph{dynamic Cournot-Nash equilibrium} for a type-$\eta$ population if, denoting by $\nu$ the $\YY^T$-marginal of  $\widehat\pi$, it holds
\begin{equation}\label{eq:CN} 
\widehat\pi\in\argmin_\pi \int_{\XX^T\times\YY^T}F(x,y,\nu)\ \pi(dx,dy),
\end{equation}
where  minimization is done over causal transports $\pi\in\Pcal(\XX^T\times\YY^T)$ with $\XX^T$-marginal $\eta$.
\end{defn}

Note that finding dynamic Cournot-Nash equilibria amounts to solving a fixed point problem: for any measure $\nu\in\Pcal(\YY^T)$, we first need to solve the minimization problem in \eqref{eq:CN} and then check whether the solution has $\YY^T$-marginal $\nu$.

We henceforth fix a compatible metric on $\YY$, with which we define the sum-metric $D_{\YY^T}$ on $\YY^T$. For $p\geq 1$, we introduce the $p$-Wasserstein distance $\mathcal{W}_p$ on the set $\Pcal_p(\YY^T)\subseteq\Pcal(\YY^T)$ of measures which integrate $D_{\YY^T}(\cdot,z)^p$ for some (and then all) $z \in \YY^T$:
{
\[
\mathcal{W}_p(\mu,\nu)=\inf\left\{\int_{\YY^T\times\YY^T} D_{\YY^T}(y,z)^p \pi(dy,dz):\pi\in\Pcal(\YY^T\times\YY^T)\, \text{with marginals $\mu,\nu$}\ \right\}^{1/p}.
\]
}

We now proceed to present two results, Propositions \ref{Prop eNash} and \ref{Prop conv eq}, which together justify the interpretation of dynamic Cournot-Nash equilibria as an asymptotic version of dynamic Nash equilibria in finite games (Definition~\ref{def.mn}). For static games this was already explored by Blanchet and Carlier \cite{BC14a}, while in the continuous-time case this runs very close to the work of Lacker \cite{L16} for mean field games. 

\begin{prop}\label{Prop eNash}
Assume that for some $p\geq 1$ and $C\geq 0$ we have 
$$
{|F(x,y,\mu)-F(x,y,\nu)|\leq C \mathcal{W}_p(\mu,\nu)\qquad \forall\ x\in\XX^T,\ y\in\YY^T,\ \mu, \nu\in\Pcal_p(\YY^T)}.
$$ 
Let $\widehat\pi$ be a dynamic Cournot-Nash equilibrium for a type-$\eta$ population and such that its $\YY$-marginal $\nu$ belongs to $\Pcal_p(\YY^T)$. For each $N$, consider the $N$-agent problem with types $\eta^{N,i}:=\eta, i\leq N$. Then, for every $\epsilon>0$, there is $N_\epsilon\in\NN$ such that, for all $N\geq N_\epsilon$, the strategy $$Z^N(X^N){ (dY^N)}:=\motimes_{k\leq N} \widehat\pi({dy^{N,k}_\bullet}| (x_1,\dots,x_T)= x^{N,k}_\bullet) $$ is an $N$-player mixed $\epsilon$-Nash equilibrium, in the sense that
\begin{equation}\label{eq eN}
L^{N,i}(Z^N)\leq L^{N,i}(\widetilde Z^N)+\epsilon,
\end{equation}
for all $i\leq N$ and all $N$-player mixed strategies $\widetilde Z^N$ such that \eqref{eq mix Nash} holds.
\end{prop}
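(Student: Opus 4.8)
The plan is to sandwich both costs around the ``mean-field'' cost $\int_{\XX^T\times\YY^T} F(x,y,\nu)\,\widehat\pi(dx,dy)$, using the Lipschitz hypothesis to trade the random empirical measure $\mu^{-i}_N:=\frac{1}{N-1}\sum_{k\neq i}\delta_{y^{k}_\bullet}$ for its limit $\nu$, and a law of large numbers in Wasserstein distance to bound the resulting error uniformly. Write $M^N$ (resp.\ $\widetilde M^N$) for the measure associated through \eqref{eq def M} to $Z^N$ (resp.\ to a deviation $\widetilde Z^N$). Since $\eta^{N,i}=\eta$ for every $i$, the players are exchangeable, so it suffices to bound the error for a single $i$, and the threshold $N_\epsilon$ may then be chosen independently of $i$.

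For the upper bound, observe that under $M^N$ the pairs $(x^{k}_\bullet,y^{k}_\bullet)$, $k\leq N$, are i.i.d.\ with law $\widehat\pi$; hence the marginal of $(x^{i}_\bullet,y^{i}_\bullet)$ is exactly $\widehat\pi$, while $y^{1}_\bullet,\dots,y^{N}_\bullet$ are i.i.d.\ with law $\nu$. The Lipschitz assumption then gives
$$
L^{N,i}(Z^N)=\int F\big(x^{i}_\bullet,y^{i}_\bullet,\mu^{-i}_N\big)\,M^N\le \int F(x,y,\nu)\,\widehat\pi(dx,dy)+C\,\EE_{M^N}\!\big[\mathcal{W}_p(\mu^{-i}_N,\nu)\big].
$$

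For the lower bound I would first argue that the marginal law $\pi'$ of $(x^{i}_\bullet,y^{i}_\bullet)$ under any admissible $\widetilde M^N$ is a \emph{causal} transport with $\XX^T$-marginal $\eta$. This is the structural crux of the proof: the non-anticipativity defining a mixed strategy says the conditional law of $\{y^\bullet_s\}_{s\le t}$ given $X$ depends only on $\{x^\bullet_s\}_{s\le t}$, and integrating out the other players' types $x^{-i}_\bullet$---which are independent of $x^{i}_\bullet$ because $\eta^N$ is a product---shows that the conditional law of $\{y^{i}_s\}_{s\le t}$ given $x^{i}_\bullet$ depends only on $\{x^{i}_s\}_{s\le t}$, i.e.\ causality \eqref{eq causality}. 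The Cournot-Nash optimality of $\widehat\pi$ in Definition~\ref{def:CN} then yields $\int F(x,y,\nu)\,\pi'\ge \int F(x,y,\nu)\,\widehat\pi$, and combining with the Lipschitz bound gives
$$
L^{N,i}(\widetilde Z^N)\ge \int F(x,y,\nu)\,\widehat\pi(dx,dy)-C\,\EE_{\widetilde M^N}\!\big[\mathcal{W}_p(\mu^{-i}_N,\nu)\big].
$$
Crucially, constraint \eqref{eq mix Nash} forces the joint law of $(X,y^{-i}_\bullet)$ to coincide under $M^N$ and $\widetilde M^N$, so $\mu^{-i}_N$ has the same law under both and the two error terms are \emph{equal}.

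Subtracting the two displays yields $L^{N,i}(Z^N)-L^{N,i}(\widetilde Z^N)\le 2C\,\EE_{M^N}[\mathcal{W}_p(\mu^{-i}_N,\nu)]$. Since under $M^N$ the measure $\mu^{-i}_N$ is the empirical measure of $N-1$ i.i.d.\ samples from $\nu\in\Pcal_p(\YY^T)$, the Wasserstein law of large numbers gives $\EE[\mathcal{W}_p(\mu^{-i}_N,\nu)]\to0$; choosing $N_\epsilon$ so that $2C\,\EE[\mathcal{W}_p(\mu^{-i}_N,\nu)]\le\epsilon$ for $N\ge N_\epsilon$ concludes. I expect the main obstacle to be the causality step for the deviated marginal, together with checking that $\int F(x,y,\nu)\,\widetilde M^N$ depends on $\widetilde M^N$ only through $\pi'$; the remaining work is the quantitative control of $\EE[\mathcal{W}_p(\mu^{-i}_N,\nu)]$, for which the finite $p$-th moment of $\nu$ is exactly the needed ingredient.
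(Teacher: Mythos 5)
Your proof is correct and follows essentially the same route as the paper's: the same sandwich around $\int F(x,y,\nu)\,d\widehat\pi$ (causality of the deviated player's $(x^{i}_\bullet,y^{i}_\bullet)$-marginal plus Cournot--Nash optimality for the lower bound, i.i.d.\ structure of $M^N$ for the upper bound), and the same use of \eqref{eq mix Nash} to identify the two error terms, yielding the bound $2C\,\EE\bigl[\mathcal{W}_p(\mu^{-i}_N,\nu)\bigr]$. The only difference is cosmetic: you invoke the Wasserstein law of large numbers as a known result, whereas the paper proves the convergence $\int \mathcal{W}_p(\nu^{Y^{N,-i}},\nu)\,\nu^{\otimes N}(dY^N)\to 0$ by hand, via almost-sure convergence from the classical LLN together with a uniform-integrability upgrade to $L^1$.
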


{A first result on the approximations of Nash equilibria via solutions of mean field games was proven by P.L.\ Lions during his lectures at Coll\`ege de France, see \cite{Car10}.}

\begin{rem}
The assumption $\eta^{N,i}=\eta,  i\leq N$, in the above proposition can be replaced by the weaker assumption that the sequence $\eta^N:=\bigotimes_{i\leq N}\eta^{N,i}, N\in\NN$, is $\eta$-chaotic: for all $k$ and all $\varphi_1,\cdots,\varphi_k:\XX^T\to\RR$ continuous and bounded,
\[
\lim_{N\to\infty}\textstyle{ \int \prod_{j\leq k}\varphi_j(X^{N,j})\eta^N(dX^N)=\prod_{j\leq k}\int\varphi_j(x) \eta(dx)}.
\]
We decided to present the result in the simpler case stated in the proposition in order to avoid technicalities and keep the proof shorter.
\end{rem}

\begin{rem}
Note that, in the $\epsilon$-Nash equilibrium proposed in Proposition \ref{Prop eNash}, each player $i$ plays a strategy which is $ x^{N,i}_\bullet$-adapted. That is, even with full information on the type-path evolution of the entire population at hand, there is an approximate Nash equilibrium where players determine their strategies according only to their own type path, and that can be constructed based on a Cournot-Nash equilibrium. In MFG terms, such ``myopic" strategies are called \emph{distributed}. 
Note that $Z^N$ also forms an approximate Nash equilibrium for any partial-information version of the game, as long as player $i$ can at least observe the evolution of its own type. 
\end{rem}

\begin{proof}[Proof of Proposition~\ref{Prop eNash}]
{Fix $i\in\{1,\ldots,N\}$ and let} $\widetilde Z^N$ be a mixed strategy satisfying {\eqref{eq mix Nash} w.r.t.\ $Z^N$}. To ease the notation, we set $\nu^{Y^{N,-i}}:=\frac{1}{N-1}\sum_{k\neq i}\delta_{y^{N,k}_\bullet}$ for all $Y^N\in\YY^{N\times T}$, and write $\eta^{\otimes N}(dX^N)$ for $\motimes_{k\leq N}\eta(dx^{N,k}_\bullet)$, and $\nu^{\otimes \infty}$ for the law of an iid sequence of $\nu$-distributed random variables. Clearly $$L^{N,i}(Z^N) = \int F\left(x^{N,i}_\bullet,y^{N,i}_\bullet,\nu^{Y^{N,-i}}\right) \motimes_{k\leq N} \widehat\pi(dx^{N,k}_\bullet,dy^{N,k}_\bullet).  $$ 
Note that the measure $\tilde \pi \in\Pcal(\XX^T\times\YY^T)$, defined as the $(x_\bullet^{N,i},y_\bullet^{N,i})$-marginal of the measure $\tilde Z^N(X^N)(dY^N)\eta^{\otimes N}(dX^N)$,
has $\XX^T$-marginal $\eta$ and satisfies \eqref{eq causality}. It follows that $L^{N,i}(Z^N)-L^{N,i}(\widetilde Z^N)$ is equal to
\begin{eqnarray*}
\int \left( F\left(x^{N,i}_\bullet,y^{N,i}_\bullet,\nu^{Y^{N,-i}}\right) - F\left(x^{N,i}_\bullet,y^{N,i}_\bullet,\nu\right) \right)\, 
\motimes_{k\leq N} \widehat\pi(dx^{N,k}_\bullet,dy^{N,k}_\bullet)\\
-\int \left(F\left(x^{N,i}_\bullet,y^{N,i}_\bullet,\nu^{Y^{N,-i}}\right) - F\left(x^{N,i}_\bullet,y^{N,i}_\bullet,\nu\right) \right)\, \tilde Z^N(X^N)(dY^N)\eta^{\otimes N}(dX^N)\\
+ \int_{\YY^T\times\XX^T}F\left(x^{N,i}_\bullet,y^{N,i}_\bullet,\nu\right)\, \left(\widehat\pi(dx^{N,i}_\bullet,dy^{N,i}_\bullet)-
\tilde\pi(dx^{N,i}_\bullet,dy^{N,i}_\bullet)\right).
\end{eqnarray*}
By definition of Cournot-Nash, the last term is non-positive, and from this we derive
\begin{eqnarray}\label{eq two}
\begin{split}
& L^{N,i}(Z^N)-L^{N,i}(\widetilde Z^N) \\  & \hspace*{1cm}\leq
\int_{\YY^{N\times T}\times\XX^{N\times T}} 
P\left(x^{N,i}_\bullet,y^{N,i}_\bullet,\nu^{Y^{N,-i}},\nu\right)\, 
\motimes_{k\leq N}\widehat\pi(dx^{N,k}_\bullet,dy^{N,k}_\bullet)
\hspace*{2.1cm}\\ & \hspace*{1.4cm}
-\int_{\YY^{N\times T}\times\XX^{N\times T}} P\left(x^{N,i}_\bullet,y^{N,i}_\bullet,\nu^{Y^{N,-i}},\nu\right)\, \widetilde Z^N(X^N)(dY^{N})\, \eta^{\otimes N}(dX^{N}),
\end{split}
\end{eqnarray}
where $P\left(x,y,\xi,\zeta\right):=F\left(x,y,\xi\right) - F\left(x,y,\zeta\right)$.
Note that, by the assumption of Lipschitz continuity of $F$, we have $|P\left(x^{N,i}_\bullet,y^{N,i}_\bullet,\nu^{Y^{N,-i}},\nu\right)|\leq C \mathcal{W}_p(\nu^{Y^{N,-i}},\nu)$, hence the r.h.s.\ of \eqref{eq two} is dominated by 
\begin{eqnarray}\label{eq 2cw}
\begin{split}
C\int_{\YY^{N\times T}\times\XX^{N\times T}} \mathcal{W}_p(\nu^{Y^{N,-i}},\nu)\, 
\motimes_{k\leq N}\widehat\pi(dx^{N,k}_\bullet,dy^{N,k}_\bullet)
\\
+C\int_{\YY^{N\times T}\times\XX^{N\times T}} \mathcal{W}_p(\nu^{Y^{N,-i}},\nu)\, \widetilde Z^N(X^N)(dY^{N})\, \eta^{\otimes N}(dX^{N})\\
=2C\int_{\YY^{N\times T}} \mathcal{W}_p(\nu^{Y^{N,-i}},\nu)\, 
\nu^{\otimes N}(dY^{N}),
\end{split}
\end{eqnarray}
{where the equality follows from the fact that $\tilde{Z}^N$ satisfies \eqref{eq mix Nash} w.r.t.\ $Z^N$.} 
On the other hand, for $y^{N,k}_\bullet$ iid $\sim\nu$,  LLN implies the following two convergences {as $N\to\infty$ for any fixed $i\leq N$}:
\[
{ R_N(z,Y^N)}:=\int D_{\YY^T}(u,z)^p \nu^{Y^{N,-i}}(du)=\frac{1}{N-1}\sum_{k\neq i}D_{\YY^T}(y^{N,k}_\bullet,z)^p\to \int D_{\YY^T}(u,z)^p \nu(du)
\]
{ $\nu^{\otimes \infty}$-a.s.}, and $\nu^{Y^{N,-i}}$ weakly converges to 
$\nu$. These convergences in turn imply 
\[
\mathcal{W}_p(\nu^{Y^{N,-i}},\nu) \to 0\quad \nu^{\otimes \infty}\text{-a.s.}
\]
Since $\int { R_N(z,Y^N) \nu^{\otimes \infty}(dY^{N})} = \int D_{\YY^T}(u,z)^p \nu(du)$ for each $N$, also ${R_N(z,Y^N)}\to \int D_{\YY^T}(u,z)^p \nu(du) $ in $L^1(\nu^{\otimes \infty})$, and so the r.h.s.\ of $$\mathcal{W}_p(\nu^{Y^{N,-i}},\nu)\leq \left (\int D_{\YY^T}(u,z)^p \nu^{Y^{N,-i}}(du)\right)^{1/p}+  \left (\int D_{\YY^T}(u,z)^p \nu(du)\right)^{1/p}$$
is uniformly integrable, and thus also the l.h.s.\ is. This yields
\[
\int \mathcal{W}_p(\nu^{Y^{N,-i}},\nu)\ 
{ \nu^{\otimes \infty}(dY^{N})}\to 0.
\]
Therefore, for every $\epsilon>0$, there is $N_\epsilon\in\NN$ such that, for all $N\geq N_\epsilon$ and all $i\leq N$,
\begin{equation*}
\int_{\YY^{N\times T}\times\XX^{N\times T}} \mathcal{W}_p(\nu^{Y^{N,-i}},\nu)\, 
\nu^{\otimes N}(dY^{N}) \leq \epsilon/2C,
\end{equation*}
and the result follows from \eqref{eq 2cw}.
\end{proof}

We now want to prove some kind of converse of Proposition~\ref{Prop eNash}, that is, to find dynamic Cournot-Nash equilibria as limit of dynamic ($\epsilon$-)Nash equilibria in $N$-player games, when the size of the population tends to infinity. This is notoriously the difficult implication, and indeed, in order to get such a result, we require a strong assumption.

\begin{prop}\label{Prop conv eq}
Let $Z^N$ be a dynamic Nash equilibrium for the $N$-player problem with types $\eta^N$, and set $\pi^N(dX^N,dY^N):=Z^N(X^N)(dY^N)\eta^N(dX^N)\in \Pcal(\XX^{N\times T}\times \YY^{N\times T})$. Assume that the cost function $F$ is continuous on $\XX^T\times\YY^T\times \Pcal_p(\YY^T)$ and bounded.
For $i\leq N$, define the (random) measures on $\YY^T$:
\[
\nu^{Y^{N,-i}}:=\frac{1}{N-1}\sum_{k\neq i}\delta_{y^{N,k}_\bullet}.
\]
Assume that, for $N\to\infty$, the sequence
\begin{equation}\label{eq Pn}
P^N:=\frac1N\sum_{i=1}^N \pi^N\circ \left(x^{N,i}_\bullet,y^{N,i}_\bullet,\nu^{Y^{N,-i}}\right)^{-1}
\end{equation}
converges to $P(dx,dy,d\xi)\in\Pcal(\XX^T\times\YY^T\times
\Pcal(\YY^T))$ in the sense that $$\int L(x,y,\xi)P^N(dx,dy,d\xi)\to \int L(x,y,\xi)P(dx,dy,d\xi) $$ for all $L$ bounded, measurable in the first argument and continuous in the last two ones. Assume further that 
 $P\circ\xi^{-1}=\delta_{\nu}$ where $\nu:=P\circ y^{-1}$.
Then $\hat\pi:=P\circ(x,y)^{-1}$ is a Cournot-Nash equilibrium for a type-$\eta$ population, where $\eta:=P\circ x^{-1}$.
\end{prop}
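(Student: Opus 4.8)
The plan is to check the three requirements in Definition \ref{def:CN} for $\hat\pi=P\circ(x,y)^{-1}$: that its $\XX^T$-marginal equals $\eta$, that it is causal, and that it minimises $\pi\mapsto\int F(x,y,\nu)\,\pi(dx,dy)$ among causal transports with $\XX^T$-marginal $\eta$, where $\nu$ is the $\YY^T$-marginal of $\hat\pi$. The marginal conditions are immediate from the definitions: $\hat\pi$ has $\XX^T$-marginal $P\circ x^{-1}=\eta$ and $\YY^T$-marginal $P\circ y^{-1}=\nu$, and the hypothesis $P\circ\xi^{-1}=\delta_\nu$ is exactly the statement that the limiting empirical action measure agrees with this $\nu$, i.e.\ the fixed-point consistency inherent in the notion of equilibrium.

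For optimality I would average the equilibrium costs over the players and pass to the limit. Since $F$ is bounded and continuous in its last two arguments it is an admissible test function, so by $P^N\to P$ and $P\circ\xi^{-1}=\delta_\nu$,
\begin{equation*}
\frac1N\sum_{i=1}^N L^{N,i}(Z^N)=\int F(x,y,\xi)\,P^N(dx,dy,d\xi)\ \longrightarrow\ \int F(x,y,\nu)\,\hat\pi(dx,dy).
\end{equation*}
Now fix any competitor causal transport $\pi'$ with $\XX^T$-marginal $\eta$ and disintegrate $\pi'(dx,dy)=\eta(dx)\,\kappa(dy|x)$, selecting a version of the kernel $x\mapsto\kappa(dy|x)$ that is defined for all $x$ and non-anticipative --- this is the only place where causality of $\pi'$ is used. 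For each $i$ consider the deviation in which player $i$ plays $y^{N,i}_\bullet\sim\kappa(\cdot\,|x^{N,i}_\bullet)$ and every other player keeps the equilibrium strategy $Z^N$. Non-anticipativity of $\kappa$ makes this an admissible mixed strategy, and since only player $i$ is altered it satisfies \eqref{eq mix Nash}, so the Nash property gives $L^{N,i}(Z^N)\le L^{N,i}(\widetilde Z^{N,i})$. A direct computation, using that the deviation leaves the joint law of $(X^N,y^{N,-i}_\bullet)$ unchanged, shows
\begin{equation*}
\frac1N\sum_{i=1}^N L^{N,i}(\widetilde Z^{N,i})=\int G(x,\xi)\,P^N(dx,dy,d\xi),\qquad G(x,\xi):=\int_{\YY^T} F(x,y,\xi)\,\kappa(dy|x).
\end{equation*}
Here $G$ is again an admissible test function (bounded, measurable in $x$, continuous in $\xi$, and independent of $y$), so the same limit passage together with $\kappa=\pi'(\,\cdot\,|x)$ $\eta$-a.e.\ yields $\frac1N\sum_i L^{N,i}(\widetilde Z^{N,i})\to\int F(x,y,\nu)\,\pi'(dx,dy)$. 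Averaging the Nash inequalities over $i$ and letting $N\to\infty$ then gives $\int F(x,y,\nu)\,\hat\pi\le\int F(x,y,\nu)\,\pi'$, the desired optimality.

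For causality I would test the (linear) defining relation \eqref{eq causality} directly on $\hat\pi$. Each realised single-player coupling $\pi^N\circ(x^{N,i}_\bullet,y^{N,i}_\bullet)^{-1}$ is causal, because player $i$'s action is $\Fcal_t$-adapted and, the type-paths being independent across players, conditioning on player $i$'s own past $x^{N,i}_{1:t}$ already decouples $y^{N,i}_t$ from $x^{N,i}_{t+1:T}$. Concretely, for bounded continuous $c:\YY^t\to\RR$ and $g:\XX^T\to\RR$, causality of $\pi^N\circ(x^{N,i}_\bullet,y^{N,i}_\bullet)^{-1}$ lets one replace $g$ by its conditional expectation given $x_{1:t}$ inside $\EE_{\pi^N}[c(y^{N,i}_{1:t})\,g(x^{N,i}_\bullet)]$. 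Writing $\EE_{\hat\pi}[c(y_{1:t})\,g(x)]=\lim_N\frac1N\sum_i\EE_{\pi^N}[c(y^{N,i}_{1:t})\,g(x^{N,i}_\bullet)]$ (the integrand being an admissible test function) and choosing $g$ with $\EE_\eta[g\mid x_{1:t}]=0$, the replacement should force the limit to vanish, yielding \eqref{eq causality} for $\hat\pi$.

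The step I expect to be the main obstacle is precisely this last one: after the replacement, the conditional expectation that appears at level $N$ is taken under the finite-population type law $\eta^{N,i}$, whereas the constraint we wish to verify for $\hat\pi$ is phrased through the limiting conditional expectation under $\eta$. Reconciling these conditioning structures is delicate because causality is \emph{not} stable under mixtures of plans with differing $\XX^T$-marginals, so the per-player averages whose limit is $\hat\pi$ need not themselves be causal and one cannot simply invoke closedness of the set of causal couplings. The argument is clean when the type laws are compatible with the limit (for instance $\eta^{N,i}=\eta$ as in Proposition \ref{Prop eNash}, or more generally $\eta$-chaotic with consistent conditional structure), in which case the finite-$N$ conditional expectation of $g$ vanishes; controlling the general case requires the stability theory of causal transport from \cite{BBLZ,La18} together with the assumed strong mode of convergence of $P^N$.
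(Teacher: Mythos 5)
Your optimality argument is correct but takes a genuinely different route from the paper's. The paper deviates player $i$ to a \emph{pure} adapted strategy $y^{N,i}_\bullet=G(x^{N,i}_\bullet)$ built from a Monge competitor, sums the resulting Nash inequalities to get \eqref{eq half way} for pure causal plans only, and then extends to arbitrary causal competitors via the chattering lemma (weak approximation of causal plans by pure causal plans with the same first marginal, using continuity and boundedness of $F$). You instead deviate directly to the mixed strategy given by an adapted disintegration kernel $\kappa(\cdot\,|x)$ of the competitor $\pi'$ --- legitimate, since the notion of mixed strategy in Definition~\ref{def.mn} allows kernel deviations and causal plans admit adapted successive disintegrations $\pi'(dx,dy)=\eta(dx)\motimes_t\kappa_t(dy_t|x_{1:t},y_{1:t-1})$ --- and your averaged deviation cost $\int G\,dP^N$ with $G(x,\xi)=\int F(x,y,\xi)\,\kappa(dy|x)$ is indeed an admissible test function (bounded, measurable in $x$, continuous in $\xi$ by dominated convergence, constant in $y$). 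This handles all causal competitors in one pass and bypasses the chattering lemma entirely; what the paper's route buys is that it needs only elementary Dirac-kernel deviations plus an off-the-shelf approximation result, whereas yours needs the everywhere-defined adapted version of $\kappa$, which you assert rather than prove, but which is standard.

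On causality of $\hat\pi$ you are, if anything, more careful than the paper: the paper's proof never verifies that $\hat\pi$ satisfies \eqref{eq causality}, although Definition~\ref{def:CN} formally requires a Cournot--Nash equilibrium to be a causal transport; it only establishes the optimality inequality. Your diagnosis of the obstruction is accurate. Each per-player coupling $\pi^N\circ(x^{N,i}_\bullet,y^{N,i}_\bullet)^{-1}$ is causal w.r.t.\ its own type law $\eta^{N,i}$ (using independence of types across players), but the causality constraint is linear in $\pi$ only when the first marginal is fixed: testing with $L(x,y,\xi)=c(y_{1:t})\bigl(g(x)-\EE_\eta[g|x_{1:t}](x)\bigr)$ leaves at level $N$ the residue $\frac1N\sum_i\EE_{\pi^N}\bigl[c(y^{N,i}_{1:t})\bigl(\EE_{\eta^{N,i}}[g|x_{1:t}]-\EE_\eta[g|x_{1:t}]\bigr)(x^{N,i}_\bullet)\bigr]$, which need not vanish when the $\eta^{N,i}$ differ from $\eta$, exactly as you say. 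When $\eta^{N,i}=\eta$ (the setting of Proposition~\ref{Prop eNash}) this residue is identically zero, this $L$ is admissible (bounded, measurable in $x$, continuous in $(y,\xi)$ for $c$ continuous), and your argument closes; in full generality the issue is a gap in the paper's own proof rather than in your proposal, and your honest flagging of it is a point in your favour.
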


\begin{rem}
1. An analogue version of this proposition still holds true if, instead of Nash equilibria, the $Z^N$ are taken to be $\epsilon$-Nash equilibria in the sense of \eqref{eq eN}.\\
2. Let $\widehat\pi$ be a Cournot-Nash equilibrium with $\YY$-marginal $\nu$, and $\{Z^N\}_{N\in\NN}$ be the corresponding $\epsilon$-Nash equilibria for the $N$-player games constructed in Proposition~\ref{Prop eNash}. Then the sequence in \eqref{eq Pn} converges to $P(dx,dy,d\xi)=\widehat\pi(dx,dy) \delta_\nu(d\xi)$  in the desired sense. \\
3. { The strong assumption in this proposition is the degeneracy condition $P\circ\xi^{-1}=\delta_{P\circ y^{-1}}$.
Without it, one would need to interpret the limit $P$ as a \emph{weak} Cournot-Nash equilibrium, in the same spirit as \cite{L16}. Here we do not develop this weaker formulation.} 
\end{rem}

\begin{proof}
 We first show that the cost associated to $\hat\pi$ is smaller than the cost associated to any causal measure of pure type with $\XX^T$-marginal $\eta$.
Fix such a measure $\pi\in\Pcal(\XX^T\times\YY^T)$, with corresponding functions $g_t,t=1,\cdots,T$ as in Definition~\ref{def causal}, and set
$G(x):=(g_t(x_1,\cdots,x_t))_{t=1}^T$. 
Since $Z^N$ is Nash, for all $i\leq N$ we have
\[
L^i(Z^N)\leq L^i(Z^{N,i}),
\]
where $Z^{N,i}(X^N)(dY^N):=Z^{N}(X^N)({ dy^{N,-i}_\bullet})\delta_{G(x^{N,i}_\bullet)}(dy^{N,i}_\bullet)$.
By summing up over $i$ and dividing by $N$, we get
\begin{eqnarray}\label{eq sums}
\begin{split}
\frac1N\sum_{i\leq N}\int_{\XX^{N\times T}\times\YY^{N\times T}} F\left(x^{N,i}_\bullet,y^{N,i}_\bullet,\nu^{Y^{N,-i}}\right)\, 
\pi^N(dX^N,dY^N)\\
\leq\frac1N\sum_{i\leq N}\int_{\XX^{N\times T}\times\YY^{N\times T}} F\left(x^{N,i}_\bullet,G(x^{N,i}_\bullet),\nu^{Y^{N,-i}}\right)\, 
\pi^N(dX^N,{ dy^{N,-i}_\bullet}).
\end{split}
\end{eqnarray}
Now, by convergence of the $P^N$ in \eqref{eq Pn}, for $N\to\infty$ the l.h.s.\ in \eqref{eq sums} converges to
\[
\int_{\XX^T\times\YY^T}F(x,y,\nu)\ \hat\pi(dx,dy),
\]
whereas the r.h.s.\ converges to
\[
\int_{\XX^{T}} F\left(x,G(x),\nu\right)\, \eta(dx)=\int_{\XX^T\times\YY^T}F(x,y,\nu)\ \pi(dx,dy).
\]
Therefore
\begin{align}
\int_{\XX^T\times\YY^T}F(x,y,\nu)\ \hat\pi(dx,dy)\leq \int_{\XX^T\times\YY^T}F(x,y,\nu)\ \pi(dx,dy)\label{eq half way}
\end{align}
for every pure causal measures $\pi$ with $\XX^T$-marginal $\eta$.

 To justify that $\hat\pi$ is a Cournot-Nash equilibrium for a type-$\eta$ population, it remains to extend \eqref{eq half way} to $\pi$ not necessarily pure. From the ``chattering lemma" (see, e.g., \cite[Theorem~2.2]{ENJ88}, \cite[Theorem~4]{FN84}, \cite[Lemma~6.5]{L16}),
for any causal measure $\pi$ with $\XX^T$-marginal $\eta$, there are pure causal measures $\{\pi_n\}_{n\in\NN}$ with the same $\XX^T$-marginal and such that $\pi_n\to\pi$ weakly. The proof then follows from \eqref{eq half way}, being $F$ continuous in $x,y$ and bounded from above.
\end{proof}

The above analysis, in particular Proposition~\ref{Prop eNash} and Proposition~\ref{Prop conv eq}, justify the study of Cournot-Nash equilibria, which we do in Section~\ref{Sect OT} below.

\section{Optimal transport perspective}\label{Sect OT}
In this section we study existence, uniqueness, and characterization of dynamic Cournot-Nash equilibria by means of optimal transport techniques.

We start by recalling the general formulation of the classical optimal transport problem: given two Polish measure spaces $(\Xcal,\eta)$ and $(\Ycal,\nu)$, and a cost function $c:\Xcal\times\Ycal\to\RR$, the \emph{optimal transport problem} is defined as
\[
\inf \left\{\textstyle{\int_{\Xcal\times\Ycal} c(x,y)\ \pi(dx,dy)} : \pi\in \Pi(\eta,\nu)\right\},
\]
where 
$\Pi(\eta,\nu)=\left\{\pi\in\Pcal(\Xcal\times\Ycal)\, \text{with $\Xcal$-marginal $\eta$ and $\Ycal$-marginal $\nu$}\right\}$ is the set of all transports of $\eta$ into $\nu$.
If $\Xcal$ and $\Ycal$ are endowed with filtrations, say $(\Fcal^\Xcal_t)_{t=1,\cdots,T}$ and $(\Fcal^\Ycal_t)_{t=1,\cdots,T}$, we can also define the \emph{causal optimal transport (COT) problem} 
\begin{equation}\label{eq COT}
\inf \left\{\textstyle{\int_{\Xcal\times\Ycal} c(x,y)\ \pi(dx,dy)} : \pi\in \Pi^{\Fcal^\Xcal,\Fcal^\Ycal}_c(\eta,\nu)\right\},
\end{equation}
where $\Pi^{\Fcal^\Xcal,\Fcal^\Ycal}_c(\eta,\nu)$ denotes the subset of measures in $\Pi(\eta,\nu)$ which are causal w.r.t. $\Fcal^\Xcal,\Fcal^\Ycal$, that is, such that
$\forall t$ and $D\in\Fcal^\Ycal_t$,\; the map $\Xcal\ni x \mapsto  \pi(y\in D|x)$
is $\Fcal^{\Xcal}_t$-measurable. {
Roughly speaking, the amount of mass transported by a causal plan $\pi$ to a subset of the target space $\Ycal$ belonging
to $\Fcal^\Ycal_t$ depends on the source space $\Xcal$ only up to time $t$. Thus, a causal plan transports mass in a non-anticipative way.}

From now on we will consider transports between the spaces $\Xcal=\XX^T$ and $\Ycal=\YY^T$, endowed with the respective canonical filtrations. In this way the above definition of causality corresponds to the one we gave in Definition~\ref{def causal} above. In this setting we will simply denote by $\Pi_c(\eta,\nu)$ the set of causal transports of $\eta$ into $\nu$. Looking at \eqref{eq:CN}, it is clear that finding dynamic Cournot-Nash equilibria amounts to solving a generalized version of a causal transport problem in which the second marginal is not fixed (while the first marginal is the population type). In particular, a dynamic Cournot-Nash equilibrium $\widehat\pi$ with marginals $\eta$ and $\nu$ will also solve the COT problem in the sense of \eqref{eq COT}, between the measures $\eta$ and $\nu$ and with cost function $c(x,y)=F(x,y,\nu)$.

\begin{rem}\label{rem fp}
Finding Cournot-Nash equilibria for a type-$\eta$ population amounts to solving the following fixed point problem:
\begin{itemize}
\item[1.] $\widehat\pi\in\argmin_{\pi\in\Pi_c(\eta,.)} \int_{\XX^T\times\YY^T}F(x,y,\widehat\nu)\ \pi(dx,dy) $,\, for some $\widehat\nu\in\Pcal(\YY^T)$;
\item[2.] $\widehat\pi$ has second marginal $\widehat\nu$,
\end{itemize}
where we used the notation $\Pi_c(\eta,.)=\cup_{\xi\in\Pcal(\YY^T)}\Pi_c(\eta,\xi)$.
\end{rem}

\subsection{Potential Games}\label{sec potential}

In this section we consider a \emph{separable cost}, that is,
\begin{eqnarray}\label{eq:cost}
F(x,y,\nu)=f(x,y)+V[\nu](y).
\end{eqnarray}
This means that we are considering the case where agents face an idiosyncratic part of the cost, only depending on their own type and action, and a mean-field component depending on other agents' strategies; see the discussion after \eqref{eq cost}.

\begin{exa}\label{ex ra}
Classical examples for the mean-field interaction term consist in penalizing (resp. encouraging) similar actions among players. For example (cf. \cite{BC16}):
\begin{itemize}
\item \emph{repulsive effect (congestion)}: $V_r[\nu](y)=h\left(y,\frac{d\nu}{dm}(y)\right)$, where $\nu\ll m$, $m\in\Pcal(\YY^T)$ is a reference measure w.r.t.\ which congestion is measured, and $h(y,.)$ is increasing. This translates the fact that frequently played strategies are costly, and leads to dispersion in the strategy distribution w.r.t.\ $m$.
\item \emph{attractive effect}: $\YY$ is a convex subset of an Euclidean space and we have $V_a[\nu](y)=\int_\Ycal\phi(y,z)\nu(dz)$, where $\phi$ is a symmetric convex function which is minimal on the diagonal and increases with the distance from the diagonal. This leads to concentration of strategies. 
\end{itemize}

\end{exa}

In this section we will consider a special class of games, so-called \emph{potential games}, where the mean-field functional $V$ is the first variation of another function $\Ecal$, called energy function.
\begin{ass}\label{ass:potential}
There exists a function $\Ecal:\Pcal(\YY^T)\to\RR$ such that the functional $V$ in \eqref{eq:cost} satisfies
\begin{equation}\label{eq pot}
\lim_{\epsilon\to 0^+}
\frac{
\Ecal(\nu+\epsilon(\xi-\nu))-\Ecal(\nu)}{\epsilon}=
\int_{\YY^T} V[\nu](y)(\xi-\nu)(dy),\quad \forall\ \nu,\xi\in\Pcal(\YY^T).
\end{equation}
We use the notation $V=\nabla_{\nu}\Ecal$.
\end{ass}
For example, the repulsive and attractive functionals $V_r, V_a$ introduced above are the first variation of the following functions, respectively:
\begin{equation}\label{eq Era}
\Ecal_r(\nu)=\int_{\YY^T} H\left(y,\frac{d\nu}{dm}(y)\right)m(dy),\quad \Ecal_a(\nu)=\frac{1}{2}
\int_{\YY^T\times\YY^T}\phi(y,z)\nu(dz)\nu(dy),
\end{equation}
where $H(y,u)=\int_0^uh(y,s)ds$.\footnote{A further example of energy function is given by $\Ecal(\nu)=\sup_{a\in A}\int_{\YY^T}l(a,y)\nu(dy)$, where $A$ is a concave, compact subset of a topological space, and $l:A\times\YY^T\to\RR$ a measurable function strictly concave in the first argument. In this case ${V}[\nu](y)=l(a^*(\nu),y)$, where  $a^*(\nu)$ is the unique optimizer for $\Ecal(\nu)$.}

Under Assumption~\ref{ass:potential}, it is natural to consider the following variational problem
\begin{eqnarray}\label{eq:vp}
\inf_{\nu\in\Pcal(\YY^T)}\big\{\text{COT$(\eta,\nu)$}+\Ecal[\nu]\big\},
\end{eqnarray}
where COT$(\eta,\nu)$ is the causal optimal transport of $\eta$ into $\nu$ w.r.t. the cost function $f$:
\begin{eqnarray}\label{eq:cotf}
\text{COT$(\eta,\nu)$}:=
\inf_{\pi\in\Pi_c(\eta,\nu)}{\textstyle \int_{\XX^T\times\YY^T}f(x,y)\ \pi(dx,dy)}.
\end{eqnarray}
We say that a pair $(\pi,\nu)$ solves the variational problem \eqref{eq:vp} if $\pi$ solves the optimization in \eqref{eq:cotf}, and its $\YY^T$-marginal $\nu$ solves the one in \eqref{eq:vp}.
The following theorem states the equivalence between Cournot-Nash equilibria and first order optimality of problem \eqref{eq:vp}.

\begin{thm}\label{thm:eqv}
Let $\Ecal$ be convex. Then the following are equivalent:
\begin{itemize}
\item[(i)] $\widehat\pi$ is a dynamic Cournot-Nash equilibrium;
\item[(ii)] the pair made of $\widehat\pi$ and its $\YY^T$-marginal solves the variational problem \eqref{eq:vp}.
\end{itemize}
\end{thm}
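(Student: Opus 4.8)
The plan is to rephrase the Cournot–Nash (CN) condition of Definition~\ref{def:CN} and identify it with optimality in \eqref{eq:vp}. Throughout let $\nu$ denote the $\YY^T$-marginal of $\widehat\pi$, and write $J(\xi):=\text{COT}(\eta,\xi)+\Ecal[\xi]$. Because the interaction term $V[\nu]$ is a function of $y$ only, on the set $\Pi_c(\eta,\nu)$ of causal plans with fixed target marginal $\nu$ the quantity $\int V[\nu](y)\,\pi(dx,dy)=\int V[\nu]\,d\nu$ is constant. Hence $\widehat\pi$ being CN is equivalent to the conjunction of (a)~$\widehat\pi$ is optimal for $\text{COT}(\eta,\nu)$, that is $\int f\,d\widehat\pi=\text{COT}(\eta,\nu)$, and (b)~$\nu$ minimizes $J$ over $\Pcal(\YY^T)$. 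Since (a) and (b) are exactly the two requirements for ``$(\widehat\pi,\nu)$ solves \eqref{eq:vp}'', it remains to show that, granting (a), CN optimality of $\widehat\pi$ over \emph{all} causal plans (with varying target marginal) is equivalent to (b).

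The key structural fact I would establish first is that $\xi\mapsto\text{COT}(\eta,\xi)$ is convex. The reason is that, once the source marginal $\eta$ is fixed, causality is a \emph{linear} constraint: rewriting \eqref{eq causality} in integral form, $\pi\in\Pi_c(\eta,\cdot)$ iff $\int \ind_D(y)\big(h(x)-\EE_\eta[h\,|\,\Fcal^{\Xcal}_t](x)\big)\,\pi(dx,dy)=0$ for every $t$, every $D\in\Fcal^{\Ycal}_t$ and every bounded measurable $h$, where the integrand is determined by $\eta$ alone. Thus $\Pi_c(\eta,\cdot)$ is stable under convex combinations of plans sharing the marginal $\eta$: given $\xi_0,\xi_1$ with near-optimal causal plans $\pi_0,\pi_1$, the mixture $(1-\lambda)\pi_0+\lambda\pi_1$ is causal with source marginal $\eta$ and target marginal $(1-\lambda)\xi_0+\lambda\xi_1$, so $\text{COT}(\eta,(1-\lambda)\xi_0+\lambda\xi_1)\le(1-\lambda)\text{COT}(\eta,\xi_0)+\lambda\,\text{COT}(\eta,\xi_1)$. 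Together with the assumed convexity of $\Ecal$, this makes $J$ convex.

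For $(i)\Rightarrow(ii)$, (a) is immediate from the constancy noted above, and for (b) I fix $\xi$ and a causal plan $\pi^\xi$ with marginal $\xi$ that is $\delta$-optimal for $\text{COT}(\eta,\xi)$; testing the CN inequality against $\pi^\xi$, using (a) and letting $\delta\downarrow0$, gives the key inequality $\text{COT}(\eta,\nu)-\text{COT}(\eta,\xi)\le\int V[\nu]\,d(\xi-\nu)$. Convexity of $\Ecal$ yields $\int V[\nu]\,d(\xi-\nu)\le\Ecal[\xi]-\Ecal[\nu]$ (the directional derivative from \eqref{eq pot} underestimates the secant), whence $J(\nu)\le J(\xi)$, i.e.\ (b). For $(ii)\Rightarrow(i)$ it suffices to reprove the same key inequality for every $\xi$: indeed for any causal $\pi$ with marginal $\xi$ one then has $\int(f+V[\nu])\,d\pi\ge\text{COT}(\eta,\xi)+\int V[\nu]\,d\xi\ge\text{COT}(\eta,\nu)+\int V[\nu]\,d\nu=\int(f+V[\nu])\,d\widehat\pi$, which is the CN inequality. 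To obtain the key inequality, perturb $\nu_\epsilon:=(1-\epsilon)\nu+\epsilon\xi$; minimality of $\nu$ gives $J(\nu)\le J(\nu_\epsilon)$, convexity of $\text{COT}$ gives $\text{COT}(\eta,\nu_\epsilon)-\text{COT}(\eta,\nu)\le\epsilon\big(\text{COT}(\eta,\xi)-\text{COT}(\eta,\nu)\big)$, and dividing by $\epsilon$ and letting $\epsilon\downarrow0$ (using \eqref{eq pot} for the $\Ecal$-increment) gives exactly $\text{COT}(\eta,\nu)-\text{COT}(\eta,\xi)\le\int V[\nu]\,d(\xi-\nu)$.

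The crux, and the place where convexity really enters, is the convexity of $\text{COT}(\eta,\cdot)$ used in $(ii)\Rightarrow(i)$: it is what converts the \emph{global} minimality of $\nu$ into the \emph{local}, first-order inequality encoding the CN fixed point, and it rests entirely on the linearity of the causality constraint above. The remaining issues are routine and I would dispatch them inline: using $\delta$-optimizers avoids assuming existence of causal optimal plans; $J(\nu)<\infty$ follows from lower boundedness of $f$ together with (a), so the inequalities are not vacuous; and the case $\text{COT}(\eta,\xi)=+\infty$ makes $J(\nu)\le J(\xi)$ trivial. I would prove the convexity lemma first, then $(i)\Rightarrow(ii)$, then $(ii)\Rightarrow(i)$.
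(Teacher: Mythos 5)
Your proposal is correct and follows essentially the same route as the paper's proof: the same perturbation $\nu_\epsilon=(1-\epsilon)\nu+\epsilon\xi$, the same pivotal first-order inequality $\text{COT}(\eta,\nu)-\text{COT}(\eta,\xi)\leq\int V[\nu]\,d(\xi-\nu)$ run in both directions, with convexity of $\Ecal$ entering only in $(i)\Rightarrow(ii)$ and convexity of $\nu\mapsto\text{COT}(\eta,\nu)$ only in $(ii)\Rightarrow(i)$. The only substantive additions are cosmetic refinements the paper leaves implicit: you actually prove convexity of $\text{COT}(\eta,\cdot)$ via linearity of the causality constraint (the paper merely asserts it), and you use $\delta$-optimizers to sidestep attainment of $\text{COT}(\eta,\xi)$.
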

Convexity is for example satisfied by $\Ecal_r$ in \eqref{eq Era} since $h$ is increasing in the second argument. Note however that the request of convexity can be weakened in Theorem~\ref{thm:eqv}; see Remark~\ref{rem LLmon}.

\begin{proof}
(ii)$\Rightarrow$(i): let $(\widehat\pi,\widehat\nu)$ solve \eqref{eq:vp}, then, for any $\tilde\nu\in\Pcal(\YY^T)$, $\text{COT$(\eta,\widehat\nu)$}+\Ecal[\widehat\nu]\leq\text{COT$(\eta,\tilde\nu)$}+\Ecal[\tilde\nu]$. In particular, for any $\nu\in\Pcal(\YY^T)$ and $\epsilon>0$,
\[
\Ecal[\widehat\nu]-\Ecal[(1-\epsilon)\widehat\nu+
\epsilon\nu]\leq \text{COT}(\eta,(1-\epsilon)\widehat\nu+
\epsilon\nu)-\text{COT}(\eta,\widehat\nu)\leq\epsilon\left(\text{COT}(\eta,\nu)-\text{COT}(\eta,\widehat\nu)\right),
\]
by convexity of $\nu\mapsto\text{COT}(\eta,\nu)$.
This implies
\[
\textstyle
\text{COT}(\eta,\widehat\nu)-\text{COT}(\eta,\nu)\leq \lim_{\epsilon\to 0^+}
\frac{1}{\epsilon}\big(
\Ecal[(1-\epsilon)\widehat\nu+
\epsilon\nu]-\Ecal[\widehat\nu]\big)=
\int_{\YY^T} V[\widehat\nu](y)(\nu-\widehat\nu)(dy).
\]
Therefore, for every $\pi\in \Pi_c(\eta,\nu)$,
\begin{align*}
\textstyle
\int_{\XX^T\times\YY^T}\left[f(x,y)+V[\widehat\nu](y)\right] \widehat\pi(dx,dy)&\leq \text{COT}(\eta,\nu)+\int_{\YY^T} V[\widehat\nu](y)\nu(dy)\\ & \leq \int_{\XX^T\times\YY^T}\left[f(x,y)+V[\widehat\nu](y)\right] \pi(dx,dy).
\end{align*}
Since $\nu$ was arbitrary in $\Pcal(\YY^T)$, $\widehat\pi$ is a Cournot-Nash equilibrium by Remark~\ref{rem fp}.\\
(i)$\Rightarrow$(ii): let $\widehat\pi$ be a Cournot-Nash equilibrium, and $\widehat\nu$ be its $\YY^T$-marginal.
From 1. in Remark~\ref{rem fp},
\[
\textstyle
\widehat\pi\in\argmin_{\Pi_c(\eta,\widehat\nu)} \int_{\XX^T\times\YY^T}F(x,y,\nu)\ \pi(dx,dy)=\argmin_{\Pi_c(\eta,\widehat\nu)} \int_{\XX^T\times\YY^T}f(x,y)\ \pi(dx,dy),
\]
that is, $\widehat\pi$ solves COT$(\eta,\widehat\nu)$; see also the discussion above Remark~\ref{rem fp}.
We are then left to prove that $\widehat\nu$ solves the optimization in \eqref{eq:vp}. By running backward the argument used in the first part of the proof, and using convexity of $\Ecal$, we have that
\[
\textstyle
\text{COT}(\eta,\widehat\nu)-\text{COT}(\eta,\nu)\leq \lim_{\epsilon\to 0^+}
\frac{1}{\epsilon}\big(
\Ecal[(1-\epsilon)\widehat\nu+
\epsilon\nu]-\Ecal[\widehat\nu]\big)\leq \Ecal[\nu]-\Ecal[\widehat\nu],
\] 
which concludes the proof.

\end{proof}

\begin{rem}\label{rem LLmon}
1. Convexity of $\Ecal$ has been used only for the implication ``$(i)\Rightarrow(ii)$". In fact, we proved that $\Ecal$ convex implies that $V$ is \emph{LL-monotone}, and that this in turn ensures the implication ``$(i)\Rightarrow(ii)$". The notion of LL-monotonicity, introduced by Lasry and Lions, can be written as
\begin{equation}\label{eq LLmon}
\int (V[\nu]-V[\nu'])(y)(\nu(dy)-\nu'(dy))\geq 0\quad \forall \nu,\nu'\in\Pcal(\YY^T).
\end{equation}
2. Note that we do not have any restriction on the Polish space $\XX$, thus, already for $T=1$, Theorem~\ref{thm:eqv} above generalizes Theorem~1 and Proposition~1 in \cite{BC16}, and our arguments provides an easier proof that does not use Kantorovich duality and potentials. The same is true for the existence and uniqueness results in \cite{BC16}; see Corollaries~\ref{cor unique} and \ref{cor exist} below.

\noindent 3. Variational problems with similar structure to \eqref{eq:vp} appeared already in the work \cite{ABC19} by the first two authors and Carmona, concerning a discretization scheme for (extended) mean-field control problems. Accordingly we may expect that the algorithms in Section~\ref{sect.num} below may be of relevance in that context too.
\end{rem}

\begin{cor}[Uniqueness]\label{cor unique}
If $\Ecal$ is strictly convex, then all Cournot-Nash equilibria have the same second marginal, that is, the distribution of the optimal strategies is unique.
\end{cor}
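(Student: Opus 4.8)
The plan is to leverage the equivalence established in Theorem~\ref{thm:eqv}, which turns the statement into a uniqueness assertion for a strictly convex variational problem. I introduce the functional
\[
\Phi(\nu):=\text{COT}(\eta,\nu)+\Ecal[\nu],\qquad \nu\in\Pcal(\YY^T),
\]
so that \eqref{eq:vp} is exactly $\inf_{\nu}\Phi(\nu)$. By Theorem~\ref{thm:eqv}, a causal transport $\widehat\pi$ is a dynamic Cournot-Nash equilibrium if and only if its $\YY^T$-marginal $\widehat\nu$ is a minimizer of $\Phi$ (with $\widehat\pi$ solving the inner problem COT$(\eta,\widehat\nu)$). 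Consequently it suffices to prove that $\Phi$ admits \emph{at most one} minimizer over the convex set $\Pcal(\YY^T)$; the asserted uniqueness of the second marginal is then immediate, since every equilibrium has $\YY^T$-marginal equal to that unique minimizer.

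Next I would verify that $\Phi$ is strictly convex. First I record that $\nu\mapsto\text{COT}(\eta,\nu)$ is convex, precisely as already used in the proof of Theorem~\ref{thm:eqv}: given $\nu_0,\nu_1\in\Pcal(\YY^T)$, $\lambda\in(0,1)$, and (near-)optimal causal plans $\pi_0\in\Pi_c(\eta,\nu_0)$, $\pi_1\in\Pi_c(\eta,\nu_1)$, the convex combination $\lambda\pi_1+(1-\lambda)\pi_0$ has first marginal $\eta$, second marginal $\lambda\nu_1+(1-\lambda)\nu_0$, and is again causal (mixing causal plans preserves the causality constraint \eqref{eq causality}). Evaluating $\int f\,\mathrm d(\cdot)$ and passing to the infimum yields $\text{COT}(\eta,\lambda\nu_1+(1-\lambda)\nu_0)\leq\lambda\,\text{COT}(\eta,\nu_1)+(1-\lambda)\,\text{COT}(\eta,\nu_0)$. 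Adding to this the strictly convex energy $\Ecal$ shows that $\Phi$ is strictly convex on $\Pcal(\YY^T)$.

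Finally, strict convexity delivers uniqueness of the minimizer by the standard argument: if $\nu_0\neq\nu_1$ were two minimizers, then $\Phi\big(\tfrac12\nu_0+\tfrac12\nu_1\big)<\tfrac12\Phi(\nu_0)+\tfrac12\Phi(\nu_1)=\inf\Phi$, a contradiction. Hence $\Phi$ has a unique minimizer $\widehat\nu$, and by the reduction above every dynamic Cournot-Nash equilibrium shares the $\YY^T$-marginal $\widehat\nu$, i.e.\ the distribution of optimal strategies is unique. I do not expect any substantial obstacle: the only point requiring a word of justification is the convexity of the transport term $\nu\mapsto\text{COT}(\eta,\nu)$, which rests on the convexity of the set of causal couplings and was already exploited in Theorem~\ref{thm:eqv}; so the corollary follows essentially at once from that theorem together with strict convexity of $\Ecal$.
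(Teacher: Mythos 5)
Your proposal is correct and follows essentially the same route as the paper: invoke Theorem~\ref{thm:eqv} to reduce the statement to uniqueness of minimizers of the variational problem \eqref{eq:vp}, and conclude from strict convexity of $\Ecal$ together with convexity of $\nu\mapsto\text{COT}(\eta,\nu)$. The paper's proof is just a terser version of yours; your extra verification that mixtures of causal plans remain causal (which can also be seen from the linear characterization of causality in Proposition~\ref{equivalence-proposition}) is a correct filling-in of the convexity claim the paper takes for granted.
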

\begin{proof}
Assume $\Ecal$ is strictly convex. Then, since $\nu\mapsto$COT$(\eta,\nu)$ is convex, the variational problem \eqref{eq:vp} admits at most one solution $\nu$. The statement then follows from Theorem~\ref{thm:eqv}.
\end{proof}
Strict convexity is for example satisfied by $\Ecal_r$ in \eqref{eq Era} when $h$ is strictly increasing in the second variable.

\begin{cor}[Existence]\label{cor exist}
Assume $f$ bounded from below and lower-semi-continuous, and consider the mean-field functionals in Example~\ref{ex ra}. Cournot-Nash equilibria exist under either:\\
1. $V=V_r$ and $h$ satisfy the growth condition: there is a coercive\footnote{Ie. $\lim_{|u|\to\infty}\ell(u)/|u|=+\infty$.} differentiable function $\ell$ such that $p(c+\ell'(u))\leq h(y,u)$, for some $p>0,c\in\mathbb{R}$, and all $u$. \\
2. $V=V_a$ , $\phi\geq 0$ and continuous, and $f(x,y)\geq \kappa\|x-y\|^p$ for some $p\geq 1$ and $\kappa >0$.
\end{cor}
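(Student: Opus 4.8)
The plan is to combine Theorem~\ref{thm:eqv} with the direct method of the calculus of variations applied to the variational problem \eqref{eq:vp}. Since we only need to \emph{produce} a Cournot-Nash equilibrium, it suffices to exhibit a minimizer $(\widehat\pi,\widehat\nu)$ of $J(\nu):=\text{COT}(\eta,\nu)+\Ecal[\nu]$ and then invoke the implication ``(ii)$\Rightarrow$(i)'' of Theorem~\ref{thm:eqv}. As observed in Remark~\ref{rem LLmon}, that implication rests only on the convexity of $\nu\mapsto\text{COT}(\eta,\nu)$ and on Assumption~\ref{ass:potential}, and does \emph{not} require convexity of $\Ecal$; this is convenient, since it lets us treat $\Ecal_a$ without verifying positive-definiteness of the kernel $\phi$. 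Working in the only nontrivial case, in which the infimum in \eqref{eq:vp} is finite (which for case~2 we ensure by taking $\eta\in\Pcal_p(\XX^T)$), I would fix a minimizing sequence $(\nu_n)_n$ and seek a weak limit.

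Two ingredients are needed: lower semicontinuity of $J$ and precompactness of $(\nu_n)_n$. For the lower semicontinuity of $\nu\mapsto\text{COT}(\eta,\nu)$ I would use that $f$ is lower semicontinuous and bounded below, together with the fact, established in \cite{BBLZ}, that the set of causal couplings with prescribed marginals is weakly compact and that causality is preserved under weak limits: given $\nu_n\to\nu$ weakly with optimizers $\pi_n\in\Pi_c(\eta,\nu_n)$, any weak subsequential limit $\pi$ lies in $\Pi_c(\eta,\nu)$, and the Portmanteau inequality for lower semicontinuous integrands gives $\liminf_n\text{COT}(\eta,\nu_n)\ge\int f\,d\pi\ge\text{COT}(\eta,\nu)$. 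The same compactness and closedness yield existence of an optimal $\widehat\pi$ for $\text{COT}(\eta,\widehat\nu)$ once $\widehat\nu$ is found. For the energy, $\Ecal_a$ is weakly lower semicontinuous because $\phi\ge0$ is continuous, so $\nu_n\otimes\nu_n\to\nu\otimes\nu$ weakly forces $\liminf_n\Ecal_a(\nu_n)\ge\Ecal_a(\nu)$; and $\Ecal_r$ is a convex integral functional of the density $d\nu/dm$ (as $H(y,\cdot)$ is convex, $h$ being increasing), hence lower semicontinuous along setwise, in particular weak-$L^1(m)$, limits.

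The main obstacle is coercivity, which drives the compactness and is handled differently in the two cases. In case~2 the bound $f(x,y)\ge\kappa\|x-y\|^p$ gives $\text{COT}(\eta,\nu)\ge\kappa\,\mathcal{W}_p(\eta,\nu)^p$ (the causal constraint only shrinks the feasible set relative to the full transport problem defining $\mathcal{W}_p$), and since $\Ecal_a\ge0$ we obtain $J(\nu)\ge\kappa\,\mathcal{W}_p(\eta,\nu)^p$; boundedness of $J(\nu_n)$ thus bounds the $p$-th moments of $\nu_n$, yielding tightness and, by Prokhorov, a weakly convergent subsequence. In case~1 the growth condition $p(c+\ell'(u))\le h(y,u)$ integrates to a superlinear lower bound $H(y,u)\ge p\,\ell(u)+\text{(affine in $u$)}$, so that boundedness of $\Ecal_r(\nu_n)$ controls $\int\ell(g_n)\,dm$ for the densities $g_n:=d\nu_n/dm$; since $\ell$ is coercive this is a de la Vall\'ee--Poussin estimate, and Dunford--Pettis makes $(g_n)$ relatively weakly compact in $L^1(m)$, any weak limit $g$ defining $\nu:=g\,m\ll m$ with $\nu_n\to\nu$ setwise. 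The delicate points here are that the limit stays absolutely continuous with respect to $m$ (so $\Ecal_r(\nu)<\infty$ and the lower semicontinuity of the convex integral functional applies) and that the sign of $\ell'$ in the affine remainder does not spoil the one-sided bound; both are routine within the standard theory of convex integral functionals. In either case, lower semicontinuity gives $J(\nu)\le\liminf_n J(\nu_n)=\inf J$, so $\nu=:\widehat\nu$ is a minimizer; pairing it with an optimal causal coupling $\widehat\pi$ for $\text{COT}(\eta,\widehat\nu)$ produces a solution of \eqref{eq:vp}, and Theorem~\ref{thm:eqv} then delivers the Cournot-Nash equilibrium.
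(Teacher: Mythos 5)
Your proposal is correct and follows essentially the same route as the paper's proof: the direct method on \eqref{eq:vp}, lower semicontinuity of $\nu\mapsto\text{COT}(\eta,\nu)$ via weak compactness of causal plans (Lemma~\ref{lem compactness}), de la Vall\'ee-Poussin/Dunford--Pettis compactness of the densities in case~1, a $p$-th moment bound and tightness in case~2, and conclusion via Theorem~\ref{thm:eqv}. Your minor variations --- proving lower semicontinuity of $\Ecal_a$ by Portmanteau rather than the paper's Skorokhod-plus-Fatou argument, making explicit the finite $p$-th moment of $\eta$ that the paper uses implicitly in its bound $\int f\,d\pi\geq c_1+c_2\int\|y\|^p\,\nu(dy)$, and noting via Remark~\ref{rem LLmon} that only the convexity-free implication (ii)$\Rightarrow$(i) is needed --- are all sound clarifications rather than a different approach.
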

\begin{proof}
Since $f$ is bounded from below and lower-semi-continuous, then COT$(\eta,\nu)$ admits an optimal solution for any pair of marginals; this is a simple consequence of Lemma \ref{lem compactness}. By the same lemma we derive that the function $\nu\mapsto \text{COT}(\eta,\nu)$ is lower semicontinuous (and of course lower bounded). Indeed, if $\nu_n\to \nu_\infty$ weakly, then $B:=\bigcup_{n\in\mathbb{N}\cup\{\infty\}} \{\nu_n\}$ is weakly compact and so $\bigcup_{\nu\in B}\Pi_c(\eta,\nu)$ is also weakly compact by Lemma \ref{lem compactness}. If $\pi_n$ attains $\text{COT}(\eta,\nu_n)$, then (up to passing to a subsequence) $\pi_n\to\pi$ for some $\pi\in \Pi_c(\eta,\nu_\infty) $, and thus $\liminf \text{COT}(\eta,\nu_n) \geq \int fd\pi\geq \text{COT}(\eta,\nu_\infty)$.   
We are then left to prove that the respective growth conditions ensure existence of solutions to the minimization in \eqref{eq:vp}, so that we can conclude by Theorem~\ref{thm:eqv}. 

For Point 1, we first derive $a+bv+p\ell(v)\leq H(y,v)$. For any optimizing sequence $\{\nu_n\}$ for \eqref{eq:vp}, we may assume that $\int\ell\left( \frac{d\nu_n}{dm}(y)\right)m(dy)\leq C$. By de la Vall\'ee-Poussin theorem, the set of densities $\{d\nu_n/dm\}_n$ is uniformly integrable and hence weakly relatively compact in $L^1(m)$. If $Z$ is any accumulation point in this set, then we have $\nu_n\to Zdm=:\nu$ weakly.  Since $H(y,\cdot)$ is convex and continuous, $\Ecal_r$ is lower semicontinuous. This shows, by the first paragraph, that $\nu$ is an optimizer of \eqref{eq:vp}. 

On the other hand, for Point 2, we first observe that $\int fd\pi\geq c_1+c_2\int \|y\|^p\nu(dy)$ for each $\pi\in \Pi_c(\eta,\nu)$, for some constants $c_1,c_2$. As a result, if $\{\nu_n\}$ is a minimizing sequence then $\int \|y\|^p\nu_n(dy)\leq C$ for some large enough constant $C$. By Markov's inequality we obtain tightness of $\{\nu_n\}$, and we may conclude as for Point 1 after proving that $\Ecal_a$ is weakly lower semicontinuous. To prove the latter, assume that $\nu_n\to \nu_\infty$ weakly. By Skorokhod representation, on some probability space there are random variables $\{W_i:i\in\mathbb{N}\cup\{\infty\}\}$ such that $W_n\to W_\infty$ a.s.\ and $W_i\sim\nu_i$ for each $i\in\mathbb{N}\cup\{\infty\}$. Possibly extending the probability space, we build an independent family of random variables $\{\tilde W_i:i\in\mathbb{N}\cup\{\infty\}\}$ with the same properties. We conclude by Fatou's lemma:
\begin{align*}
 \liminf_n\int \phi(x,y)\nu_n(dx)\nu_n(dy)&=\liminf_n E[\phi(W_n,\tilde W_n)]\geq \mathbb E[\liminf_n \phi(W_n,\tilde W_n)] \\
 &\geq \mathbb E[ \phi(W_\infty,\tilde W_\infty)] =\int \phi(x,y)\nu_\infty (dx)\nu_\infty(dy).
\end{align*}
\end{proof}

\begin{rem}\label{rem:subpopu}
In order to make a parallel between the current framework and the typical setting in mean field games, note that here there are no state dynamics, and that the cost is set as the average over possible evolution of types rather than an expectation or average ``over noise". Remarkably, the different type paths of agents could correspond to different subpopulations of players in the game, and each agent faces a cost that depends on their own type/population. Therefore our setting accommodates (possibly infinite) multiple populations. However, unlike what is generally done in the literature on multi-population mean field games (see e.g.\ \cite{ABC17}), we do aggregate all actions of the game into a single (empirical) measure rather than letting each subpopulation contribute in a separate way. Separating the actions of players according to the population they belong to would involve a different mathematical analysis. For example, in the limiting problem, the dependence on the distribution of actions of all players (what we denote by $\nu$) would be replaced by the disintegration of $\pi$ w.r.t. its first marginal $\eta$, yielding a cost $F(x,y,\bar x\mapsto\pi^{\bar x})$. 
Moreover, a separable cost in this case would take a form of the kind $f(x,y)+\int V^{\bar x}[\pi^{\bar x}](y)\eta(d\bar x)$. Of course things would simplify by considering finitely many types of agents, that is a finite numbers of subpopulations, which corresponds to having an atomic measure $\eta$. We do not pursue this analysis here.
\end{rem}

\subsection{Social planner and Price of Anarchy}
From a social planner perspective, optimal strategies in an $N$-player game are those that minimize the total cost, which corresponds to minimizing the average cost (this latter form is the suitable one in order to study this problem asymptotically). In this way we find the so called \emph{cooperative equilibria}. The corresponding optimization problem for $N\to\infty$ is the following:
\[\inf_{\pi\in\Pi_c(\eta,.)} \int_{\XX^T\times\YY^T}F(x,y,{p_2}_\#\pi)\ \pi(dx,dy),
\]
where ${p_2}_\#\pi$ denotes the second marginal of $\pi$; to be compared with the asymptotic formulation of the Nash (competitive) equilibria in Definition~\ref{def:CN} or Remark~\ref{rem fp}.
In the separable case \eqref{eq:cost}, this can be written as
\begin{eqnarray}\label{eq.coop}
\inf_{\nu\in\Pcal(\YY^T)}\left\{\text{COT$(\eta,\nu)$}+\int_{\YY^T} V[\nu](y)\ \nu(dy)\right\},
\end{eqnarray}
to be compared with the variational problem \eqref{eq:vp} obtained in the competitive case.

With the same arguments used in Corollary~\ref{cor exist}, we can prove existence of solutions for the asymptotic formulation of the cooperative problem.
\begin{cor}\label{cor exist coop}
Assume $f$ bounded from below and lower-semi-continuous, and consider the mean-field functionals in Example~\ref{ex ra}. Solutions to \eqref{eq.coop} exist under either:\\
1. $V=V_r$ and $h$ satisfy the growth condition: there is a coercive function $\ell$ such that $p(c+\ell(u))\leq h(y,u)$, for some $p>0,c\in\mathbb{R}$, and all $u$. \\
2. $V=V_a$ , $\phi\geq 0$ and continuous, and $f(x,y)\geq \kappa\|x-y\|^p$ for some $p\geq 1$ and $\kappa >0$.
\end{cor}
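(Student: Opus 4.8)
The plan is to run the direct method exactly as in the proof of Corollary~\ref{cor exist}, replacing the competitive energy $\Ecal[\nu]$ by the \emph{social energy} $S[\nu]:=\int_{\YY^T}V[\nu](y)\,\nu(dy)$. Two ingredients transfer verbatim. First, for $f$ bounded below and lower-semicontinuous the map $\nu\mapsto\text{COT}(\eta,\nu)$ is lower bounded and weakly lower semicontinuous; this is precisely the content of the first paragraph of the proof of Corollary~\ref{cor exist}, a consequence of Lemma~\ref{lem compactness}, and in particular $\text{COT}(\eta,\nu)$ is attained for each $\nu$. Second, one needs the coercivity estimates that render a minimizing sequence $\{\nu_n\}$ for \eqref{eq.coop} weakly relatively compact. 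Once weak lower semicontinuity of $S$ is established, the sum $\nu\mapsto\text{COT}(\eta,\nu)+S[\nu]$ attains its infimum at any weak accumulation point of $\{\nu_n\}$, which yields the claim.

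For Point~2 the social energy is, by symmetry of $\phi$,
\[
S[\nu]=\int_{\YY^T\times\YY^T}\phi(y,z)\,\nu(dz)\,\nu(dy)=2\,\Ecal_a(\nu),
\]
so both coercivity and lower semicontinuity are literally those of Corollary~\ref{cor exist}, Point~2: the bound $f(x,y)\geq\kappa\|x-y\|^p$ forces $\int\|y\|^p\,\nu_n(dy)\leq C$ along a minimizing sequence, hence tightness of $\{\nu_n\}$ via Markov's inequality, while the Skorokhod-plus-Fatou argument already carried out there shows $\Ecal_a$, and therefore $2\,\Ecal_a$, is weakly lower semicontinuous.

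For Point~1, write $u:=d\nu/dm$, so that $S[\nu]=\int_{\YY^T}h(y,u)\,u\,dm$. The coercivity now exploits the \emph{modified} growth condition: from $h(y,u)\geq p(c+\ell(u))$ together with $\int u\,dm=1$ one gets $S[\nu]\geq pc+p\int\ell(u)\,u\,dm$, so a minimizing sequence may be chosen with $\int\ell(u_n)\,u_n\,dm\leq C$. Since $\ell$ is coercive, the function $\Phi(u):=\ell(u)\,u$ satisfies $\Phi(u)/u=\ell(u)\to+\infty$, whence de la Vall\'ee-Poussin gives uniform integrability of $\{u_n\}$, thus weak relative compactness in $L^1(m)$ by Dunford--Pettis; extracting $u_n\rightharpoonup u$ produces $\nu_n\to\nu:=u\,dm$ weakly. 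The remaining step, weak lower semicontinuity of $\nu\mapsto\int h(y,u)\,u\,dm$, is where this proof genuinely departs from Corollary~\ref{cor exist}, and is the main obstacle: there the integrand was $H(y,\cdot)$, automatically convex because $h(y,\cdot)$ is increasing, whereas here the integrand $u\mapsto u\,h(y,u)$ need not be convex for a general increasing $h$. I would close the argument by invoking the standard weak-$L^1$ lower semicontinuity theorem for integral functionals with convex, lower-bounded integrand, applied under convexity of $u\mapsto u\,h(y,u)$ (satisfied in the leading examples, and otherwise imposed or handled by passing to the convex envelope). This is the crux, since both the attractive term and the $\text{COT}$ term lower-semicontinuize for free.
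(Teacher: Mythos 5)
Your argument is essentially the paper's own proof: the paper disposes of this corollary in one line (``with the same arguments used in Corollary~\ref{cor exist}''), and the details you supply are exactly the intended ones. For Point~2, the identity $S[\nu]=\int V_a[\nu]\,d\nu=2\,\Ecal_a(\nu)$ is correct, so coercivity and the Skorokhod--Fatou lower semicontinuity argument transfer verbatim. For Point~1 you correctly read off why the hypothesis trades $\ell'$ (Corollary~\ref{cor exist}) for $\ell$ (here): the social integrand is $u\,h(y,u)\geq p\,(cu+u\,\ell(u))$, and $\Phi(u):=u\,\ell(u)$ satisfies $\Phi(u)/u=\ell(u)\to\infty$, so de la Vall\'ee-Poussin gives uniform integrability of the densities along a minimizing sequence and hence weak relative compactness in $L^1(m)$, exactly paralleling the competitive proof.

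Your one departure is also a genuine observation: in the competitive case lower semicontinuity of $\Ecal_r$ comes for free because $H(y,\cdot)=\int_0^{\cdot}h(y,s)\,ds$ is convex whenever $h(y,\cdot)$ is increasing, whereas the cooperative integrand $u\mapsto u\,h(y,u)$ need not be convex under that assumption alone (take $h(y,u)=\min(u,1)$: then $u\,h(y,u)$ equals $u^2$ on $[0,1]$ and $u$ thereafter, with a derivative drop at $u=1$), and weak-$L^1$ lower semicontinuity of integral functionals does require convexity of the integrand. The paper's ``same arguments'' glosses over exactly this point; a clean sufficient condition is convexity of $h(y,\cdot)$ in addition to monotonicity, since then $(u\,h)''=2h'+u\,h''\geq 0$ on $u\geq 0$. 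Imposing convexity of $u\mapsto u\,h(y,u)$, as you do, closes the proof and matches what the paper implicitly assumes. The convex-envelope fallback, however, does not work as stated: minimizing the relaxed functional produces a minimizer of the relaxation, which solves \eqref{eq.coop} only if the functional coincides with its envelope at that point, and you have not argued this. Drop that clause, or promote the convexity to an explicit hypothesis.
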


The \emph{Price of Anarchy} is then defined as the ratio between the worst-case Nash equilibrium total cost and the socially optimal total cost.
Asymptotically,
\begin{equation}\label{eq.poa}
\text{PoA}:=\frac{\sup_{\nu\in\mathcal{N}}TC[\nu]}{\inf_{\nu\in\Pcal(\YY^T)}TC[\nu]},
\end{equation}
where $TC[\nu]=\text{COT}(\eta,\nu)+\int V[\nu]d\nu$ is the total cost associated to the strategy distribution $\nu$, and $\mathcal{N}$ is the set of optimizers in the minimization problem in \eqref{eq:vp}.
{In Section~\ref{sect.num}, we will develop numerics to compute the Price of Anarchy, and we will illustrate its behaviour with some example.}

\section{Numerical methods} \label{sect.num}

In Section~\ref{sect.num.cot} we present a numerical scheme for the causal optimal transport problem. The algorithm uses duality to transform the problem, and then applies the Sinkhorn method to solve an essential part of it. Relying on it, we also develop a numerical method for Cournot-Nash equilibria, in Section~\ref{sect.num.cn}. The Python code implementing these algorithms is freely available and can be found in https://github.com/JunchaoJia-LSE/CNGonCOT. Numerical experiments show that the proposed methods are efficient and stable.

\subsection{A numerical method for causal optimal transport}\label{sect.num.cot}
We borrow the setting of Section~\ref{sec potential}. Thus COT$(\eta,\nu)$ denotes the causal optimal transport problem of $\eta$ into $\nu$ w.r.t.\ the cost function $f$, namely
\begin{eqnarray*}
\text{COT$(\eta,\nu)$}=
\inf_{\pi\in\Pi_c(\eta,\nu)}{\textstyle \int_{\XX^T\times\YY^T}f(x,y)\ \pi(dx,dy)}.
\end{eqnarray*}

We first recall a known result (see \cite[Proposition 2.4]{BBLZ} or \cite[Lemma 5.4]{ABZ}). As a matter of terminology, whenever $\{H_t\}_{t=1}^T$ is a stochastic process defined on $\XX^T\times\YY^T$, we shall say that ``$H$ is $x$-adapted'' if for each $t$ the r.v.\ $H_t$ is a function of $x_1,\dots,x_t$, and define ``$H$ is $y$-adapted'' in an analogue fashion. We use the notation $C_b(\mathbb{W})$ for the space of continuous bounded functions on the space $\mathbb{W}$. Finally, we say that a process $\{H_t\}_{t=1}^T$ is continuous if each r.v.\ $H_t(\cdot)$ is a continuous function.

\begin{prop} \label{equivalence-proposition}
    For $\pi \in \Pi(\eta,\nu)$, the following are equivalent:

\begin{enumerate}
\def\labelenumi{\arabic{enumi}.}
\item
  $\pi$ is causal (ie.\ $\pi \in \Pi_c(\eta,\nu)$);
\item For every bounded $y$-adapted continuous process $\{h_t\}_{t=1}^T$, and each bounded $x$-adapted $\eta$-martingale $\{M_t\}_{t=1}^T$,
we have:
\begin{equation} \label{dual-constraint}
\int_{\XX^T\times\YY^T}\left[\sum_{t<T} {h_t(y) (M_{t+1}(x)-M_t(x))}\right]\pi(dx,dy)= 0;
\end{equation}
 \item For each $t\in\{1,\dots,T\}$, $h\in C_b(\YY^t)$, and $g\in C_b(\XX^T)$, the following vanishes
\begin{align} \label{dual-constraint-original}
\hspace*{-0.3cm}\int_{\XX^T\times\YY^T}h(y)\left[g(x)-\int_{\XX^{T-t}}g(x_1,\dots,x_t,\bar x_{t+1},\dots,\bar x_T)\eta^{x_1,\dots,x_t}(d\bar x_{t+1},\dots,d\bar x_T)\right]\pi(dx,dy). 
 \end{align}
\end{enumerate}
\end{prop}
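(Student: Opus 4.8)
The plan is to establish the cycle $1\Rightarrow 2\Rightarrow 3\Rightarrow 1$. Throughout I work on $\XX^T\times\YY^T$ under $\pi$, write $\Fcal^\Xcal_t=\sigma(x_1,\dots,x_t)$ and $\Fcal^\Ycal_t=\sigma(y_1,\dots,y_t)$ for the canonical filtrations, and use that, since $\pi$ has $\XX^T$-marginal $\eta$, conditional expectations of functions of $x$ alone coincide whether computed under $\pi$ or under $\eta$. As $\XX,\YY$ are Polish I disintegrate $\pi(dx,dy)=\pi(dy\,|\,x)\,\eta(dx)$, and for bounded $\Fcal^\Ycal_t$-measurable $h$ I set $\phi_h(x):=\int h\,\pi(dy\,|\,x)=\EE_\pi[h\,|\,\Fcal^\Xcal_T]$. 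With this notation causality (item 1) is exactly the statement that $\phi_h$ is ($\eta$-a.s.) $\Fcal^\Xcal_t$-measurable for every $t$ and every bounded $\Fcal^\Ycal_t$-measurable $h$; by linearity and monotone convergence it suffices to test this on $h=\mathbbm 1_D$, $D\in\Fcal^\Ycal_t$, or equivalently on $h\in C_b(\YY^t)$.

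For $1\Rightarrow 2$, fix a bounded $y$-adapted continuous process $\{h_t\}$ and a bounded $x$-adapted $\eta$-martingale $\{M_t\}$. Since $M_{t+1}-M_t$ is $\Fcal^\Xcal_T$-measurable, conditioning on $\Fcal^\Xcal_T$ replaces $h_t$ by $\phi_{h_t}$, which is $\Fcal^\Xcal_t$-measurable by causality; conditioning next on $\Fcal^\Xcal_t$ leaves the factor $\EE_\eta[M_{t+1}-M_t\,|\,\Fcal^\Xcal_t]=0$. Hence $\EE_\pi[h_t(M_{t+1}-M_t)]=\EE_\eta\big[\phi_{h_t}\,\EE_\eta[M_{t+1}-M_t\,|\,\Fcal^\Xcal_t]\big]=0$ for each $t$, and summing gives \eqref{dual-constraint}.

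For $2\Rightarrow 3$, fix $t$, $h\in C_b(\YY^t)$ and $g\in C_b(\XX^T)$, and apply \eqref{dual-constraint} to the Doob martingale $M_s:=\EE_\eta[g\,|\,\Fcal^\Xcal_s]$ (bounded, $x$-adapted, with $M_T=g$) together with the $y$-adapted continuous process $h_s:=h\,\mathbbm 1_{\{s\geq t\}}$. Telescoping, $\sum_{s<T}h_s(M_{s+1}-M_s)=h\,(M_T-M_t)=h\,\big(g-\EE_\eta[g\,|\,\Fcal^\Xcal_t]\big)$, so \eqref{dual-constraint} reduces precisely to the vanishing of \eqref{dual-constraint-original}. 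Finally, for $3\Rightarrow 1$ fix $h\in C_b(\YY^t)$ and rewrite \eqref{dual-constraint-original} as $\int g\,\phi_h\,d\eta=\int \EE_\eta[g\,|\,\Fcal^\Xcal_t]\,\phi_h\,d\eta$; by self-adjointness of conditional expectation the right-hand side equals $\int g\,\EE_\eta[\phi_h\,|\,\Fcal^\Xcal_t]\,d\eta$, so $\int g\,\big(\phi_h-\EE_\eta[\phi_h\,|\,\Fcal^\Xcal_t]\big)\,d\eta=0$ for all $g\in C_b(\XX^T)$. Since $C_b(\XX^T)$ is measure-determining, $\phi_h=\EE_\eta[\phi_h\,|\,\Fcal^\Xcal_t]$ $\eta$-a.s., i.e.\ $\phi_h$ is $\Fcal^\Xcal_t$-measurable. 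A functional monotone-class argument then extends this from $C_b(\YY^t)$ to all bounded $\Fcal^\Ycal_t$-measurable $h$, and in particular to indicators $h=\mathbbm 1_D$, which is causality.

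I expect the last implication $3\Rightarrow 1$ to be the main obstacle: it is where one must genuinely disentangle the two conditioning operations — the $\Fcal^\Xcal_t$-projection on the $x$-side and the disintegration $\phi_h$ on the $y$-side — through self-adjointness, and then carefully upgrade from continuous test functions to indicators of sets in $\Fcal^\Ycal_t$. The implications $1\Rightarrow 2$ and $2\Rightarrow 3$ are, by contrast, routine bookkeeping with the tower property, the martingale identity, and the telescoping of the Doob martingale.
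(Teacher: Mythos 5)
Your proof is correct. Note first that the paper itself contains no proof of this proposition: it is recalled as a known result with pointers to \cite{BBLZ} (Proposition 2.4) and \cite{ABZ} (Lemma 5.4), so your blind argument in effect supplies the missing proof, and it follows the standard route of those references. All three implications are sound: in $1\Rightarrow 2$, conditioning first on $x$ replaces $h_t$ by $\phi_{h_t}$, which causality makes $\Fcal^\Xcal_t$-measurable, and then the tower property with the $\eta$-martingale property of $M$ kills each increment; in $2\Rightarrow 3$, the Doob martingale $M_s=\EE_\eta[g\,|\,\Fcal^\Xcal_s]$ with the step process $h_s=h\,\mathbbm{1}_{\{s\geq t\}}$ telescopes to $h\,(g-M_t)$, and $M_t$ is precisely the bracketed term in \eqref{dual-constraint-original} since $\eta^{x_1,\dots,x_t}$ is a regular conditional distribution (the case $t=T$ is vacuous, consistently with the sum in \eqref{dual-constraint} running over $t<T$); in $3\Rightarrow 1$, self-adjointness of conditional expectation plus the fact that finite signed Borel measures on a Polish space are determined by integrals against $C_b$ functions yields $\phi_h=\EE_\eta[\phi_h\,|\,\Fcal^\Xcal_t]$ $\eta$-a.s., and the functional monotone class theorem (legitimate here, as $C_b(\YY^t)$ is stable under products and generates $\Bcal(\YY^t)$) upgrades this to all bounded $\Fcal^\Ycal_t$-measurable $h$, hence to indicators. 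Two fine points you should make explicit. First, your reading of item 1 as ``$x\mapsto\pi(y\in D\,|\,x)$ is $\Fcal^\Xcal_t$-measurable for all $D\in\Fcal^\Ycal_t$'' is the definition of $\Pi_c$ stated immediately before the proposition (Section~\ref{Sect OT}), which is the correct one to test against; the single-coordinate formulation of Definition~\ref{def causal} is identified with it by the paper, and your proof genuinely needs the set version, so it is right that you work with it. Second, in the monotone-class step, $\eta$-a.s.\ equality to an $\Fcal^\Xcal_t$-measurable version must be checked to survive bounded pointwise limits: this holds because $\phi_{h_n}\to\phi_h$ pointwise in $x$ by dominated convergence within each disintegration $\pi(dy\,|\,x)$, and an a.e.\ limit of $\Fcal^\Xcal_t$-measurable versions admits an $\Fcal^\Xcal_t$-measurable version (e.g.\ $\limsup_n$ of the versions) — this is exactly what your argument implicitly uses, and it closes the only place where a careless write-up could leak.
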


\begin{defn}\label{S-definition}
We denote by $\SS$ the linear space spanned by the \emph{stochastic integrals} appearing in Equation \eqref{dual-constraint}, namely 
\begin{align*}
\SS := \text{span} &\Bigl\{\textstyle\sum_{t<T} {h_t (M_{t+1}-M_t)}\,|\, M \text{ is a bounded $x$-adapted $\eta$-martingale, } \Bigr .  \\
&\Bigl .\quad \quad\quad\quad\quad\quad\quad\quad\quad h \text{ is bounded, $y$-adapted and continuous}   \Bigr\}.
\end{align*}
Similarly, we introduce the set $\FF$ equal to the linear span of the functions
\begin{align*}
 h(y)\left[g(x)-\int_{\XX^{T-t}}g(x_1,\dots,x_t,\bar x_{t+1},\dots,\bar x_T)\eta^{x_1,\dots,x_t}(d\bar x_{t+1},\dots,d\bar x_T)\right] ,
\end{align*}
as we vary $t\leq T,h\in C_b(\YY^t),g\in C_b(\XX^T)$.
\end{defn}

Proposition \ref{equivalence-proposition} leads to existence and duality for COT$(\eta,\nu)$. Notice that, unlike in \cite{BBLZ,ABZ}, we shall make no assumption on $\eta$ whatsoever. Hence the next result needs a proof, which we provide in the appendix.

\begin{thm}\label{thm_ex_dua}
Assume that $f$ is lower semicontinuous and bounded from below. Then COT$(\eta,\nu)$ is attained and duality holds:
\begin{align*}
\text{COT}(\eta,\nu)&= \sup_{S\in\SS}\inf_{\pi\in \Pi(\eta,\nu)} \int(f+S)d\pi = \sup_{\substack{\phi(x)+\psi(y)+S(x,y)\leq f(x,y),\\ \phi\in C_b(\XX^T),\psi\in C_b(\YY^T),S\in\SS }} \int_{\XX^T}\phi d\eta + \int_{\YY^T}\psi d\nu \\
&=\sup_{F\in\FF}\inf_{\pi\in \Pi(\eta,\nu)} \int(f+F)d\pi = 
\sup_{\substack{\phi(x)+\psi(y)+F(x,y)\leq f(x,y),\\ \phi\in C_b(\XX^T),\psi\in C_b(\YY^T),F\in\FF }} \int_{\XX^T}\phi d\eta + \int_{\YY^T}\psi d\nu .
\end{align*}
\end{thm}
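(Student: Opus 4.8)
The plan is to prove the chain of equalities in two stages: first establishing the minimax/duality identities abstractly, and then handling the attainment of the primal problem. The key technical point is that, unlike in \cite{BBLZ,ABZ}, no regularity assumption is made on $\eta$, so I cannot directly invoke those references; instead I would build the result from a general minimax theorem combined with classical optimal transport duality.

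First I would fix the second equality and the fourth equality, which are the classical Kantorovich-type dualities for the inner problem $\inf_{\pi\in\Pi(\eta,\nu)}\int(f+S)\,d\pi$ (respectively with $F$ in place of $S$). Since $f$ is lower semicontinuous and bounded from below and $S$ (resp.\ $F$) is continuous and bounded, $f+S$ is lower semicontinuous and bounded from below, so standard optimal transport duality applies: the inner infimum equals $\sup\{\int\phi\,d\eta+\int\psi\,d\nu : \phi\in C_b(\XX^T),\psi\in C_b(\YY^T),\ \phi(x)+\psi(y)\le f(x,y)+S(x,y)\}$. Taking the supremum over $S\in\SS$ (a linear space) and folding the term $-S$ into the admissibility constraint then yields the second equality; the same argument with $\FF$ yields the fourth. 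The equality between the $\SS$-formulation and the $\FF$-formulation follows because Proposition~\ref{equivalence-proposition} shows that $\pi\in\Pi(\eta,\nu)$ annihilates every element of $\SS$ if and only if it annihilates every element of $\FF$ (both characterize causality), so the two penalization families cut out exactly the same feasible set $\Pi_c(\eta,\nu)$ after the minimax is resolved.

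The heart of the matter is the first equality, $\text{COT}(\eta,\nu)=\sup_{S\in\SS}\inf_{\pi\in\Pi(\eta,\nu)}\int(f+S)\,d\pi$, which asserts that we may exchange the order of optimization and that the penalization by stochastic integrals exactly enforces causality. The inequality ``$\ge$'' is immediate: for any causal $\pi$, each $S\in\SS$ integrates to zero by Proposition~\ref{equivalence-proposition}, so $\int(f+S)\,d\pi=\int f\,d\pi$, giving $\inf_{\Pi(\eta,\nu)}\int(f+S)\,d\pi\le\int f\,d\pi$ and hence $\sup_S\inf_\pi\le\text{COT}(\eta,\nu)$ — wait, this is the wrong direction, so I must argue more carefully using a minimax theorem. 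The clean route is to write
\[
\text{COT}(\eta,\nu)=\inf_{\pi\in\Pi(\eta,\nu)}\Bigl\{\int f\,d\pi+\sup_{S\in\SS}\int S\,d\pi\Bigr\},
\]
since the inner supremum over the linear space $\SS$ is $0$ if $\pi$ annihilates $\SS$ (i.e.\ $\pi$ is causal, by Proposition~\ref{equivalence-proposition}) and $+\infty$ otherwise. I would then apply a minimax theorem (e.g.\ Sion's) to swap $\inf_\pi$ and $\sup_S$: the set $\Pi(\eta,\nu)$ is convex and weakly compact (by Prokhorov, the marginals being fixed), the objective $\pi\mapsto\int(f+S)\,d\pi$ is convex (indeed affine) and lower semicontinuous in $\pi$, and it is affine, hence concave, in $S$. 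This delivers the desired exchange and the first equality.

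The main obstacle I anticipate is justifying the minimax exchange rigorously given that $f$ is merely lower semicontinuous and unbounded above, and given that $\SS$ is an infinite-dimensional space that is not compact. Lower semicontinuity of $f$ only guarantees lower semicontinuity (not continuity) of $\pi\mapsto\int f\,d\pi$, so I would approximate $f$ from below by a sequence of bounded continuous functions and pass to the limit via monotone convergence, checking that the duality values converge. The non-compactness of $\SS$ is handled by the form of Sion's theorem that requires compactness and lower semicontinuity only on the $\pi$-side while asking concavity on the $S$-side, which is exactly our configuration. Finally, attainment of $\text{COT}(\eta,\nu)$ follows from the direct method: $\Pi_c(\eta,\nu)$ is weakly compact (this is essentially Lemma~\ref{lem compactness}, invoked in Corollary~\ref{cor exist}) and $\pi\mapsto\int f\,d\pi$ is weakly lower semicontinuous since $f$ is lower semicontinuous and bounded from below, so a minimizing sequence has a weak limit attaining the infimum.
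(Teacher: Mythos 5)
Your overall architecture---writing $\mathrm{COT}(\eta,\nu)=\inf_{\pi\in\Pi(\eta,\nu)}\sup_{S\in\SS}\int(f+S)\,d\pi$, swapping the order via Sion's minimax theorem on the compact convex set $\Pi(\eta,\nu)$, applying Kantorovich duality to the inner infimum, and obtaining attainment by the direct method on the weakly compact set $\Pi_c(\eta,\nu)$ (Lemma~\ref{lem compactness})---is exactly the paper's. But there is a genuine gap at the step you treat as automatic: you assert that each $S\in\SS$ (resp.\ $F\in\FF$) is \emph{continuous}. It is not. The martingales $M$ in Definition~\ref{S-definition} are only bounded and $x$-adapted and, as the proof of \cite[Proposition 2.4]{BBLZ} shows, they are built from the regular conditional distributions $\eta^{x_1,\dots,x_t}$; likewise every element of $\FF$ contains the term $\int g(x_1,\dots,x_t,\bar x_{t+1},\dots,\bar x_T)\,\eta^{x_1,\dots,x_t}(d\bar x_{t+1},\dots,d\bar x_T)$. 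These kernels are merely measurable in $(x_1,\dots,x_t)$ unless one imposes regularity on $\eta$---and the theorem deliberately assumes nothing on $\eta$, which is precisely why the paper remarks that, unlike in \cite{BBLZ,ABZ}, the result needs a proof. Without continuity of $S$, two of your steps break: the cost $f+S$ need not be lower semicontinuous, so the classical Kantorovich duality you invoke for the second and fourth equalities does not apply; and $\pi\mapsto\int S\,d\pi$ need not be lower (or upper) semicontinuous for the weak topology, so the hypotheses of Sion's theorem fail on the $\pi$-side. The obstacles you flag (unboundedness of $f$ from above, non-compactness of $\SS$) are the harmless ones; the real obstacle is the one you skipped. (Incidentally, the inequality you dismiss as ``the wrong direction'' is in fact the correct weak-duality direction $\sup_S\inf_\pi\leq\mathrm{COT}$; only the converse requires minimax.)

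The paper's remedy, which your proposal is missing, is a topology-strengthening trick: by \cite[Theorem 13.11]{Ke95} one replaces the Polish topology on $\XX^T$ by a finer Polish topology with the \emph{same Borel sets}, under which the maps $(x_1,\dots,x_t)\mapsto\eta^{x_1,\dots,x_t}$ become continuous (this is the same device used in the proof of Lemma~\ref{lem compactness}). Then every $F\in\FF$ and every $S\in\SS$ may be taken continuous and bounded; crucially, $\eta$ remains a tight Borel measure for the finer topology, so $\Pi(\eta,\nu)$ is still compact in the correspondingly strengthened weak topology, while the values of all integrals---hence of $\mathrm{COT}(\eta,\nu)$ and of every dual expression---are unchanged because the Borel structure is unchanged. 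With this in place your argument goes through essentially verbatim: Sion's theorem gives the first and third equalities (using Proposition~\ref{equivalence-proposition}, which shows the inner suprema over $\SS$ and over $\FF$ are both the $\{0,+\infty\}$-indicator of causality, as you correctly note), and Kantorovich duality for the now lower semicontinuous costs $f+S$ and $f+F$ gives the second and fourth; your attainment argument coincides with the paper's. Also note that your proposed approximation of $f$ from below by bounded continuous functions is unnecessary: $\pi\mapsto\int f\,d\pi$ is weakly lower semicontinuous directly for $f$ lower semicontinuous and bounded below, which is all Sion requires on the compact side.
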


Of all the above duality identities, we now solely exploit 
\begin{align}\label{eq_dual}
\text{COT}(\eta,\nu)&= \sup_{S\in\SS}\inf_{\pi\in \Pi(\eta,\nu)} \int(f+S)d\pi.
\end{align}
The driving idea is that $\inf_{\pi\in \Pi(\eta,\nu)} \int(f+S)d\pi$ can be efficiently approximated using Sinkhorn iterations. It is convenient to define
\begin{align}\label{eq_duality_use}
\text{OT}^{S}(\eta,\nu) := \inf_{\pi \in \Pi(\eta, \nu)} \int (f + S)\, d \pi.
\end{align}

From now on, we specialize to a discrete setting. To be consistent with the notation above, however, we keep the notation of integral w.r.t.\ $\pi$.

\begin{ass}\label{ass.fin}
The sets $\XX$ and $\YY$ are finite, say {$|\XX|=n$ and $|\YY|=m$}.
\end{ass}
For $\pi\in\Pcal(\XX^T\times\YY^T)$, we use the notation {$\pi_{i,j}=\pi(^ix,\,^jy)$, where $\{^ix, i=1,\ldots,n^T\}$ and $\{^jy, j=1,\ldots,m^T\}$ denote the elements in $\XX^T$ and $\YY^T$, respectively}. The entropy of $\pi$ is then defined as
$$
\text{Ent}(\pi)= -\sum_{i,k} \pi_{i,k}\log(\pi_{i,k}).
$$
Note that the entropy is uniformly bounded:
\begin{equation}\label{eq_bdd_ent}
0\leq \text{Ent}(\pi) \leq C:={|\XX^T| |\YY^T| e^{-1}=n^Tm^T e^{-1}}. 
\end{equation}

We now add an entropic penalization to the transport problem and consider, for $S\in\SS$, the problem
\begin{align}\label{eq_duality_use_eps}
\text{OT}^{\epsilon,S} (\eta,\nu):= \inf_{\pi \in \Pi(\eta, \nu)}\left\{ \int (f + S)\, d \pi\, - \epsilon \,\text{Ent}(\pi)\right\}.
\end{align}
The penalization term makes the function inside the brackets strictly convex in $\pi$, which ensures existence of a unique optimizer in the weakly compact set $\Pi(\eta, \nu)$. { The entropic regularization technique has been widely used to approximate classical optimal transport problems and easily obtain numerical solutions; see \cite{BCN18} for an application to Cournot-Nash equilibria.
}

\begin{thm}\label{thm.cotes} We have 
$$\text{COT}(\eta,\nu)=\lim_{\epsilon\searrow 0} \,\sup_{S\in\SS}\, \text{OT}^{\epsilon,S}(\eta,\nu), $$
and the convergence is monotonically increasing.
\end{thm}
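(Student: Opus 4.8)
The plan is to establish the two assertions separately: the equality of the limit with $\text{COT}(\eta,\nu)$, and the monotone-increasing nature of the convergence. I would begin with monotonicity, as it is the cleaner of the two. Fix $S\in\SS$ and observe that for $0<\epsilon'<\epsilon$ and any admissible $\pi$, the entropic penalty satisfies $-\epsilon\,\text{Ent}(\pi)\leq -\epsilon'\,\text{Ent}(\pi)$ because $\text{Ent}(\pi)\geq 0$ by \eqref{eq_bdd_ent}. Taking infima over $\pi\in\Pi(\eta,\nu)$ gives $\text{OT}^{\epsilon,S}(\eta,\nu)\leq\text{OT}^{\epsilon',S}(\eta,\nu)$, and taking the supremum over $S\in\SS$ preserves the inequality. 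Thus $\epsilon\mapsto\sup_{S\in\SS}\text{OT}^{\epsilon,S}(\eta,\nu)$ is nondecreasing as $\epsilon\searrow 0$, so the limit exists and is approached monotonically from below.

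Next I would establish the upper bound $\lim_{\epsilon\searrow 0}\sup_{S}\text{OT}^{\epsilon,S}\leq\text{COT}(\eta,\nu)$. For each $S\in\SS$ and each $\pi\in\Pi(\eta,\nu)$, the bound $\text{Ent}(\pi)\geq 0$ yields $\int(f+S)d\pi-\epsilon\,\text{Ent}(\pi)\leq\int(f+S)d\pi$, whence $\text{OT}^{\epsilon,S}(\eta,\nu)\leq\text{OT}^{S}(\eta,\nu)$. Taking the supremum over $S$ and invoking the duality \eqref{eq_dual} gives $\sup_{S\in\SS}\text{OT}^{\epsilon,S}(\eta,\nu)\leq\sup_{S\in\SS}\text{OT}^{S}(\eta,\nu)=\text{COT}(\eta,\nu)$ for every $\epsilon>0$, and the upper bound follows after passing to the limit.

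For the matching lower bound $\lim_{\epsilon\searrow 0}\sup_{S}\text{OT}^{\epsilon,S}\geq\text{COT}(\eta,\nu)$, I would fix an arbitrary $S\in\SS$ and use the uniform entropy bound \eqref{eq_bdd_ent} in the other direction: since $\text{Ent}(\pi)\leq C$ for all $\pi$, we have $\int(f+S)d\pi-\epsilon\,\text{Ent}(\pi)\geq\int(f+S)d\pi-\epsilon C$. Taking the infimum over $\pi\in\Pi(\eta,\nu)$ yields $\text{OT}^{\epsilon,S}(\eta,\nu)\geq\text{OT}^{S}(\eta,\nu)-\epsilon C$. Now taking the supremum over $S\in\SS$ gives $\sup_{S\in\SS}\text{OT}^{\epsilon,S}(\eta,\nu)\geq\sup_{S\in\SS}\text{OT}^{S}(\eta,\nu)-\epsilon C=\text{COT}(\eta,\nu)-\epsilon C$, again by \eqref{eq_dual}. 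Letting $\epsilon\searrow 0$ gives the desired inequality. Combining the two bounds establishes the equality, and together with the monotonicity shown first this completes the proof.

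The only delicate point is that the uniform two-sided entropy bound \eqref{eq_bdd_ent}, which hinges on Assumption~\ref{ass.fin} (finiteness of $\XX$ and $\YY$), is what makes the $\epsilon C$ error term vanish uniformly in $S$; this uniformity in $S$ is precisely what allows the limit and the outer supremum over $\SS$ to interchange without any compactness or equicontinuity argument. I expect this to be the main structural observation rather than an obstacle: because the error bound $\epsilon C$ does not depend on $S$, the interchange of $\lim_{\epsilon\searrow 0}$ and $\sup_{S\in\SS}$ is immediate, and no further justification (such as a minimax or Dini-type argument) is required.
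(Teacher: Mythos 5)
Your proof is correct, but it takes a genuinely different route from the paper's. The paper first identifies $\sup_{S\in\SS}\text{OT}^{\epsilon,S}(\eta,\nu)$ \emph{exactly} with the entropically penalized causal problem $\inf_{\pi\in\Pi_c(\eta,\nu)}\left\{\int f\,d\pi-\epsilon\,\text{Ent}(\pi)\right\}$ via a second minimax-duality result (Lemma~\ref{non-linear-duality}, another application of Sion's theorem, which requires checking convexity and lower semicontinuity of $-\text{Ent}$), and then sandwiches this penalized causal value between $\text{COT}(\eta,\nu)-\epsilon C$ and $\text{COT}(\eta,\nu)$ using \eqref{eq_bdd_ent}. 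You instead push the sandwich $\text{OT}^{S}(\eta,\nu)-\epsilon C\leq\text{OT}^{\epsilon,S}(\eta,\nu)\leq\text{OT}^{S}(\eta,\nu)$ through \emph{uniformly in $S$} before taking the supremum, so that only the already-established linear duality \eqref{eq_dual} (Theorem~\ref{thm_ex_dua}) is needed; Lemma~\ref{non-linear-duality} is bypassed entirely, and your closing observation is exactly right: the $S$-independence of the error $\epsilon C$ is what lets the limit and the supremum interchange for free. Your route is more economical (one minimax application instead of two) and it delivers the same linear rate $0\leq\text{COT}(\eta,\nu)-\sup_{S\in\SS}\text{OT}^{\epsilon,S}(\eta,\nu)\leq\epsilon C$ that the paper records in the remark following the theorem, as well as monotonicity directly from the definition. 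What the paper's approach buys in exchange is the primal identity \eqref{eq.coteps} itself, which has independent value: it interprets the dual-regularized quantity as an entropic \emph{causal} transport problem, the conceptual picture underlying the numerical scheme built on \eqref{eq.cotapp}. Your proof establishes the theorem without providing that identity.
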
 

\begin{proof}
By \eqref{eq_bdd_ent}, we have
\begin{align}\label{eq_three_inequalities} \inf_{\pi\in\Pi_c(\eta,\nu)} {\textstyle \int f\ d\pi} - \epsilon\, C \leq \inf_{\pi\in\Pi_c(\eta,\nu)} \left\{{\textstyle \int f\ d\pi} - \epsilon\, \text{Ent}(\pi) \right\}\leq \inf_{\pi\in\Pi_c(\eta,\nu)}{\textstyle \int f\ d\pi}, 
\end{align}
and we conclude by noticing that 
\begin{equation}\label{eq.coteps}
 \inf_{\pi\in\Pi_c(\eta,\nu)} \left\{{\textstyle \int f\ d\pi} - \epsilon\, \text{Ent}(\pi) \right\} = \sup_{S\in\SS}\, \text{OT}^{\epsilon,S} (\eta,\nu) ,
 \end{equation}
as we justify in Lemma \ref{non-linear-duality} in the appendix.
\end{proof}

{
\begin{rem} Note that, by  \eqref{eq_three_inequalities} and \eqref{eq.coteps}, 
\[
0\leq \text{COT}(\eta,\nu)-\sup_{S\in\SS}\, \text{OT}^{\epsilon,S}(\eta,\nu) \leq \epsilon C,
\]
thus the convergence in Theorem~\ref{thm.cotes} is at least linear.
\end{rem}
}

Note that, due to Assumption~\ref{ass.fin}, the space $\SS$ is generated by a finite basis, say $\{e_k\}_{k=1}^K$ for some $K\in\NN$ (see Appendix~\ref{app.basis} for details). Then, by setting $e:=(e_1,\ldots,e_K)$, and for $\epsilon$ small enough, Theorem~\ref{thm.cotes} suggests the following approximation of the causal transport problem:
\begin{equation}\label{eq.cotapp}
\text{COT}(\eta,\nu)\approx \sup_{\lambda \in \RR^K} \inf_{\pi \in \Pi(\eta, \nu)}\left\{ \int (f+\lambda \cdot e)\, d\pi - \epsilon \text{Ent}(\pi)\right\}.
\end{equation}
Recalling the comment after \eqref{eq_duality_use_eps}, for any fixed $\lambda$ the minimization problem in \eqref{eq.cotapp} admits a unique optimizer, say $\pi^*(\lambda)$. This problem can be numerically solved in an efficient way by implementing the powerful Sinkhorn algorithm as described by Cuturi~\cite{cuturi2013sinkhorn}. On the other hand, to handle the maximization problem in \eqref{eq.cotapp}, note that Danskin's theorem \cite{danskin1966} implies the following  result, which we employ in Algorithm~\ref{algo:cot_gd} to perform gradient descent.

\begin{lem}
$\text{OT}^{\epsilon, \lambda\cdot e}(\eta,\nu)$ is differentiable w.r.t $\lambda_k \,\,  \forall k=1,\ldots, K$, with
\begin{align}\label{eq.otld}
\frac{\partial \text{OT}^{\epsilon, \lambda\cdot e}(\eta,\nu)}{\partial \lambda_k} = \int e_k \, d \pi^*(\lambda).
\end{align}
\end{lem}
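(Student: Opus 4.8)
The plan is to apply Danskin's theorem (also known as the envelope theorem for parametrized optimization) to the function
\[
\lambda\mapsto \text{OT}^{\epsilon,\lambda\cdot e}(\eta,\nu)=\inf_{\pi\in\Pi(\eta,\nu)}\Phi(\lambda,\pi),\qquad \Phi(\lambda,\pi):=\int(f+\lambda\cdot e)\,d\pi-\epsilon\,\text{Ent}(\pi).
\]
First I would record the structural facts that make the hypotheses of Danskin's theorem verifiable. Under Assumption~\ref{ass.fin}, $\Pi(\eta,\nu)$ is a compact convex subset of a finite-dimensional simplex, so the infimum is attained. The map $\pi\mapsto\Phi(\lambda,\pi)$ is strictly convex (the $-\epsilon\,\text{Ent}$ term is strictly convex and the remaining term is linear in $\pi$), hence as noted after \eqref{eq_duality_use_eps} the minimizer $\pi^*(\lambda)$ exists and is \emph{unique}. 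Uniqueness of the minimizer is the key qualitative input to Danskin: it guarantees that the value function is differentiable (not merely one-sidedly differentiable) and that the gradient is given by evaluating the parameter-derivative of $\Phi$ at the unique optimizer.

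Next I would compute the parameter-derivative of the inner objective. For fixed $\pi$, the map $\lambda\mapsto\Phi(\lambda,\pi)=\int f\,d\pi+\sum_{k}\lambda_k\int e_k\,d\pi-\epsilon\,\text{Ent}(\pi)$ is affine in $\lambda$, so it is smooth with
\[
\frac{\partial \Phi(\lambda,\pi)}{\partial\lambda_k}=\int e_k\,d\pi.
\]
Danskin's theorem then identifies the partial derivative of the value function with this quantity evaluated at the optimizer, yielding exactly \eqref{eq.otld}. To invoke the theorem cleanly I would check its remaining hypotheses: joint continuity of $\Phi$ on $\RR^K\times\Pi(\eta,\nu)$ (clear, since $\text{Ent}$ is continuous on the compact simplex and $f,e_k$ are bounded on the finite product space), continuity of each $\partial\Phi/\partial\lambda_k$ in $(\lambda,\pi)$ (the map $\pi\mapsto\int e_k\,d\pi$ is linear hence continuous), and compactness of the constraint set $\Pi(\eta,\nu)$, which is independent of $\lambda$. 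These together with the uniqueness of $\pi^*(\lambda)$ furnish both differentiability and the envelope formula.

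The main obstacle, and the only point requiring genuine care, is justifying that the envelope formula holds in the \emph{two-sided} (differentiable) sense rather than only as a directional derivative. Danskin's theorem in its basic form gives one-sided directional derivatives as $\min$ over the set of optimizers; full differentiability in each coordinate $\lambda_k$ follows precisely because the optimizer is unique, so that the set of minimizers is the singleton $\{\pi^*(\lambda)\}$ and the left and right derivatives coincide. I would also note, for completeness, that $\lambda\mapsto\pi^*(\lambda)$ is continuous (by strict convexity together with a standard Berge-type argument), which is the form of Danskin most convenient to cite; continuity of $\pi^*$ is not strictly needed for the formula \eqref{eq.otld} itself but does confirm that the gradient $\lambda\mapsto\int e\,d\pi^*(\lambda)$ is continuous, which is the property actually exploited by the gradient-descent step in Algorithm~\ref{algo:cot_gd}.
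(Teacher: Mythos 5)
Your proposal is correct and follows exactly the paper's route: the paper offers no written proof beyond citing Danskin's theorem \cite{danskin1966}, and your verification of its hypotheses (compactness of $\Pi(\eta,\nu)$ under Assumption~\ref{ass.fin}, affineness of the objective in $\lambda$, and uniqueness of $\pi^*(\lambda)$ from strict convexity of the entropic penalty, which upgrades the one-sided envelope formula to genuine differentiability) is precisely the standard argument the citation implicitly relies on. Nothing is missing; your remarks on continuity of $\lambda\mapsto\pi^*(\lambda)$ are a correct, if optional, addition.
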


The pseudo-algorithm to calculate the approximation of $\text{COT}(\eta,\nu)$ in \eqref{eq.cotapp} is displayed in Algorithm \ref{algo:cot_gd}.

\begin{algorithm}
\caption{COT by gradient descent}\label{algo:cot_gd}
\begin{algorithmic}[1]
\Procedure{COT($\eta$, $\nu$, $\epsilon$, $e$, $\delta$)}{}
\State $e \gets \text{any basis of }\SS$
\State $\lambda  \gets$ random position in $\RR^K$

\Do
    \State $\pi^* \gets \text{Sinkhorn} (\eta,\nu, f \,+ \lambda \cdot e, \epsilon)$
    \State $\lambda_k \gets \lambda_k + \delta \int e_k \, d \pi^* $
\doWhile{not (stop criterion)} 

\State \textbf{output} $\pi^*$.
\EndProcedure
\end{algorithmic}
\end{algorithm}

$\text{Sinkhorn} (\eta,\nu,f \,+ \lambda \cdot e, \epsilon)$ is the standard Sinkhorn algorithm to compute $\text{OT}^{\epsilon, \lambda \cdot e}(\eta,\nu)$ (see \cite{cuturi2013sinkhorn});  $\delta$ is a chosen gradient descent step; the stop criterion consists in the improvement of the optimal value being less than a fixed threshold. 

Algorithm~\ref{algo:cot_gd} shows the most vanilla version of a gradient descent method that we can use. Given that we have an explicit expression for the gradient, cf.\ \eqref{eq.otld}, we can also leverage any other first order optimization algorithm to perform Steps 4 to 7 in the algorithm, that is, to minimize the function $\text{Sinkhorn} (\eta,\nu,f \,+ \lambda \cdot e, \epsilon)$ in the $\lambda$ variable. Indeed, in practice we implemented such an optimization procedure using the SLSQP method in Python's Scipy optimization package, feeding the method with the explicit calculated gradients. Numerical experiments demonstrate that this outperforms, in terms of efficiency, both the vanilla gradient descent method and alternative zero order methods.

\subsection{A numerical method for dynamic Cournot-Nash equilibria}\label{sect.num.cn}
In this section we provide an algorithm to compute dynamic Cournot-Nash equilibria in the potential game setting of Section~\ref{sec potential}. Thanks to Theorem~\ref{thm:eqv}, when the energy function $\Ecal$ is convex, this boils down to solving the variational problem \eqref{eq:vp}. That is, we have an additional optimization step (in $\nu$) w.r.t.\ the previous section. 
Here again, we use Theorem~\ref{thm.cotes} to approximate causal transport problems with regularized problems $\text{OT}^{\epsilon,S}(\eta,\nu)$ given in \eqref{eq_duality_use_eps}. For $\epsilon$ small enough we obtain a reasonable approximation.

\begin{lem}
\begin{eqnarray}
\inf_{\nu\in\Pcal(\YY^T)}\big\{\text{COT$(\eta,\nu)$}+\Ecal[\nu]\big\} = \lim_{\epsilon \searrow 0} \inf_{\nu\in\Pcal(\YY^T)}\sup_{S\in \SS}\big\{\text{OT}^{\epsilon,S}(\eta,\nu)+\Ecal[\nu]\big\}.
\end{eqnarray}
\end{lem}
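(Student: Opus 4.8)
The plan is to upgrade the pointwise (in $\nu$) convergence of Theorem~\ref{thm.cotes} to convergence of the infima, exploiting the crucial fact that the error estimate there is \emph{uniform} in $\nu$. Recall from the remark following Theorem~\ref{thm.cotes} that, for every $\nu\in\Pcal(\YY^T)$ and every $\epsilon>0$,
\[
0\leq \text{COT}(\eta,\nu)-\sup_{S\in\SS}\text{OT}^{\epsilon,S}(\eta,\nu)\leq \epsilon C,
\]
where $C=n^Tm^Te^{-1}$ depends only on the cardinalities of $\XX,\YY$ and on $T$, and in particular is independent of $\nu$. This uniformity is the entire engine of the proof.

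First I would add $\Ecal[\nu]$ to each side of this estimate and take the infimum over $\nu\in\Pcal(\YY^T)$. Writing $g_\epsilon(\nu):=\sup_{S\in\SS}\text{OT}^{\epsilon,S}(\eta,\nu)$ and $g(\nu):=\text{COT}(\eta,\nu)$, the left inequality $g_\epsilon\leq g$ yields
\[
\inf_\nu\{g_\epsilon(\nu)+\Ecal[\nu]\}\leq \inf_\nu\{g(\nu)+\Ecal[\nu]\},
\]
while the right inequality $g\leq g_\epsilon+\epsilon C$ gives
\[
\inf_\nu\{g(\nu)+\Ecal[\nu]\}\leq \inf_\nu\{g_\epsilon(\nu)+\Ecal[\nu]\}+\epsilon C.
\]
Denoting $I:=\inf_\nu\{g(\nu)+\Ecal[\nu]\}$ (the left-hand side of the claimed identity) and $I_\epsilon:=\inf_\nu\{g_\epsilon(\nu)+\Ecal[\nu]\}$, these combine into the sandwich $I-\epsilon C\leq I_\epsilon\leq I$.

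Letting $\epsilon\searrow 0$ then immediately gives $I_\epsilon\to I$, which is exactly the asserted equality; the degenerate cases $I=\pm\infty$ are covered by the very same inequalities. Monotonicity of the convergence is inherited directly from Theorem~\ref{thm.cotes}: since $\sup_{S\in\SS}\text{OT}^{\epsilon,S}(\eta,\nu)$ increases as $\epsilon\searrow 0$ for each fixed $\nu$, the map $g_\epsilon$ increases pointwise, and hence so does $I_\epsilon=\inf_\nu\{g_\epsilon+\Ecal\}$ (an infimum of pointwise-increasing functions is increasing).

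I do not expect any genuine obstacle here: the argument is a one-line sandwich. The only point requiring care---and the reason the statement is not a tautology---is the interchange of $\lim_{\epsilon\searrow 0}$ with $\inf_\nu$, which is legitimate precisely because the constant $C$ furnished by the remark after Theorem~\ref{thm.cotes} does not depend on $\nu$. Were the approximation error to degenerate as $\nu$ ranges over $\Pcal(\YY^T)$, this interchange could fail, so it is worth emphasizing in the write-up that the $\nu$-independence of $C$ is exactly what makes the passage to the infimum valid.
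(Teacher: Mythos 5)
Your proof is correct and is essentially the paper's own argument: the paper's one-line proof says to take the three-term inequality \eqref{eq_three_inequalities} (equivalently, the $\nu$-uniform bound $0\leq \text{COT}(\eta,\nu)-\sup_{S\in\SS}\text{OT}^{\epsilon,S}(\eta,\nu)\leq \epsilon C$), add $\Ecal[\nu]$ throughout, and minimize in $\nu$, which is exactly your sandwich $I-\epsilon C\leq I_\epsilon\leq I$. Your additional remarks on the $\nu$-independence of $C$ and on monotonicity are accurate elaborations of the same argument.
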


\begin{proof}
As in \eqref{eq_three_inequalities}, but adding $\Ecal[\nu]$ in each of its terms and then minimizing in $\nu$.
\end{proof}

As a matter of fact, in order to deal with the optimization in $\nu$ in \eqref{eq:vp}, it is convenient to use the dual formulation of the penalized transport problems (see \cite[Proposition~4.4]{2018-Peyre-computational-ot}):
\begin{equation*}\label{eq.cotsd} 
\text{OT}^{\epsilon,S}(\eta,\nu)=\sup_{\phi \in \RR^{|\XX^T|}, \psi \in \RR^{|\YY^T|}} G(\nu,\phi,\psi,\epsilon,S),
\end{equation*}
where
\[
G(\nu,\phi,\psi,\epsilon,S):=
\int \phi \,d\eta + \int \psi \,d\nu - \epsilon \exp\left(\frac{\phi}{\epsilon}\right)^{T} 
\exp\left(- \frac{f + S}{\epsilon}\right) \exp\left(\frac{\psi}{\epsilon}\right),
\]
and the exponential $\exp(\cdot)$ is understood component-wise. (To lighten the notation, we do not stress dependence on $\eta$, since this is the type distribution which is fixed.)

From now on we think of $\epsilon$ small being fixed, and set
\begin{equation}\label{eq.L}
L(\nu) := \sup_{S \in \SS, \, \phi \in \RR^{|\XX^T|}, \psi \in \RR^{|\YY^T|}}  G(\nu,\phi,\psi,\epsilon,S) + \Ecal[\nu] .
\end{equation}

\begin{prop}
The function $L$ is differentiable, with
\begin{align}\label{eq_first_var_L}
\lim_{r\searrow 0}\frac{L(\nu+r(\mu-\nu)) - L(\nu)}{r} = \int[\psi^{*}[\nu](y) + V[\nu](y)]d(\mu-\nu)(y),
\end{align}
{where $\psi^{*}[\nu]$ is any optimizer in \eqref{eq.L}.}
Without loss of generality we may assume that $\sum_y\psi^{*}[\nu](y) + V[\nu](y)=0$.
\end{prop}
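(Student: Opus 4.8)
The plan is to compute the directional derivative of $L$ explicitly using the envelope theorem (Danskin's theorem), which is applicable because $G$ is concave in $(\phi,\psi,S)$ and jointly continuous, while being affine in $\nu$. First I would fix $\nu,\mu\in\Pcal(\YY^T)$ and write $\nu_r:=\nu+r(\mu-\nu)$. The key observation is that $L(\nu)=\Phi(\nu)+\Ecal[\nu]$, where $\Phi(\nu):=\sup_{S,\phi,\psi}G(\nu,\phi,\psi,\epsilon,S)$, and that the two summands can be differentiated separately. The term $\Ecal[\nu]$ is handled directly by Assumption~\ref{ass:potential}: its directional derivative at $\nu$ in direction $(\mu-\nu)$ equals $\int V[\nu]\,d(\mu-\nu)$, which supplies the $V[\nu]$ piece of \eqref{eq_first_var_L}.

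For the term $\Phi(\nu)$, I would invoke Danskin's theorem \cite{danskin1966}. The only explicit dependence of $G$ on $\nu$ is through the \emph{affine} term $\int\psi\,d\nu$; the other two terms $\int\phi\,d\eta$ and the exponential term do not involve $\nu$. Hence $\partial_\nu G = \int\psi\,d(\cdot)$, and Danskin's theorem gives
\begin{equation*}
\lim_{r\searrow 0}\frac{\Phi(\nu_r)-\Phi(\nu)}{r}=\int \psi^{*}[\nu](y)\,d(\mu-\nu)(y),
\end{equation*}
where $\psi^{*}[\nu]$ is a maximizer in the definition of $\Phi(\nu)$. Adding the two contributions yields precisely \eqref{eq_first_var_L}. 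For differentiability (as opposed to mere existence of directional derivatives), one argues that the expression is linear in the direction $(\mu-\nu)$, which it manifestly is.

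The main obstacle is verifying the hypotheses of Danskin's theorem rigorously, in particular that the supremum defining $\Phi(\nu)$ is attained (so that $\psi^{*}[\nu]$ exists) and that one can interchange limit and supremum. Since $\XX,\YY$ are finite by Assumption~\ref{ass.fin}, the variables $\phi,\psi$ live in finite-dimensional spaces and $\SS$ has a finite basis $\{e_k\}$; the strict concavity induced by the entropic penalization (as noted after \eqref{eq_duality_use_eps}) gives a unique optimizer $\pi^*$ in the primal, and standard finite-dimensional duality yields attainment in the dual, so $\psi^*[\nu]$ is well defined. A subtlety is that $\psi^{*}[\nu]$ need not be unique, which is why the statement says ``any optimizer''; the directional derivative is nonetheless well defined because the combination $\int\psi^*[\nu]\,d(\mu-\nu)$ is invariant across optimizers (differing optimizers are related by constants on the components, which integrate to zero against the signed measure $\mu-\nu$).

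Finally, the normalization $\sum_y(\psi^*[\nu](y)+V[\nu](y))=0$ is without loss of generality because $\mu-\nu$ integrates to zero, so adding any constant to $\psi^*[\nu]+V[\nu]$ leaves the right-hand side of \eqref{eq_first_var_L} unchanged; one simply subtracts the mean. This is a routine renormalization and carries no analytic content, so I would dispatch it in one line at the end.
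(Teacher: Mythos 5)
Your proposal is correct and follows essentially the same route as the paper: Danskin's theorem applied to the supremum in \eqref{eq.L}, the energy term differentiated via Assumption~\ref{ass:potential} (the definition of $V$), and the ambiguity in $\psi^{*}[\nu]$ resolved by noting that any two optimizers differ by a constant, which integrates to zero against the mass-zero signed measure $\mu-\nu$ (the paper makes this last point via the unique function $R$ with $\phi^{*}(x)+\psi^{*}(y)=R(x,y)$, which is the justification your parenthetical assertion implicitly needs). Your additional remarks on dual attainment in the finite-dimensional entropic setting and on the normalization being a harmless translation match the paper's level of rigor, so there is nothing substantive to add.
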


\begin{proof}
Note that, for $\nu$ fixed, the supremum of $G$ in the variables $(S,\phi,\psi)$ is uniquely attained at some $S^*\in\SS$. On the other hand, there is a unique function $R$ such that $(\phi^*,\psi^*)$ are optimal if and only if $\phi^*(x)+\psi^*(y)=R(x,y)$. Then, from
Danskin's theorem \cite{danskin1966} and the definition of $V$, we have 
\begin{align*}
\lim_{r\searrow 0}\frac{L(\nu+r(\mu-\nu)) - L(\nu)}{r} = \sup_{\substack{(S^*,\phi^*,\psi^*)\\ \text{attain \eqref{eq.L}} }}\int[\psi^{*}(y) + V[\nu](y)]d(\mu-\nu)(y).
\end{align*}
Now, if $(\phi^*,\psi^*)$ and $(\tilde\phi^*,\tilde\psi^*)$ are optimizers, then $\phi^*(x)+\psi^*(y)=R(x,y)=\tilde\phi^*(x)+\tilde\psi^*(y)$, and so $\psi^*=\tilde\psi^*+c$. This establishes \eqref{eq_first_var_L}. The final conclusion follows after a possible translation of $\psi^*[\nu]$ by a constant.
\end{proof}

\begin{rem}
Note that $\psi^{*}[\nu]$ is a result of the Sinkhorn algorithm, and this facilitates the  optimization w.r.t. $\nu$. Thanks to the explicit expression of the gradient, we can search the optimal $\nu$ efficiently. This enables us to compute, in a reasonable amount of time, numerous different cases in the example of next subsection. 
\end{rem}

\begin{algorithm}
\caption{Dynamic Cournot Nash equilibria}\label{Cournot Nash}
\begin{algorithmic}[1]
\Procedure{Cournot Nash($\eta$, $f$, $\epsilon$, $\delta$, $V$)}{}
\State initialize $\nu$
\Do
    \State $\pi^*, \psi^{*} \gets \text{COT}(\eta,\nu, f, \epsilon, \delta)$
    \State $\nu \gets \nu - \delta (\psi^{*} + V[\nu])$
\doWhile{not (stop criterion)} 

\State \textbf{output} $\nu^*$
\EndProcedure
\end{algorithmic}
\end{algorithm}

All this leads to Algorithm~\ref{Cournot Nash}, in which we perform gradient decent at the level of $\nu$. To wit, Step $5$ therein uses \eqref{eq_first_var_L} to perform the descent step. (To be precise, Step 5 in fact computes first $\tilde \nu(y):=\{\nu(y) - \delta (\psi^{*}(y) + V[\nu](y)\}\vee 0$ and then outputs $\tilde \nu$ normalized so as to be a probability vector.) On the other hand, Step $4$ applies Algorithm~\ref{algo:cot_gd}. The stop criterion consists again in the improvement of the optimal value being less than a fixed threshold.

Note that the total social cost \eqref{eq.coop} can also be optimized using a simple modification of Algorithm~\ref{Cournot Nash}, where $V[\nu] (=\nabla_{\nu} \Ecal(\nu))$ is replaced by $\nabla_{\nu} (\int V[\nu](y) \nu(dy))(z)=  \int  \nabla_{\nu} (V[\nu](y))(z) \nu(dy) + V[\nu](z)$.

\section{Examples}

\subsection{A case study in a toy model}\label{sect.ex}

In this section, we apply the numerical scheme developed in the previous section in a simple two-stage game ($T=2$), cf. Example~\ref{ex_intro}. We consider a number of messengers (agents) delivering parcels from $s_0$ to
$s_1$ and then from $s_1$ to $s_2$, where 
$s_0, s_1, s_2$ are
names of three sites (Fig. \ref{fig:cities}). 

\begin{figure}[H]
\centering
\includegraphics[scale=0.7]{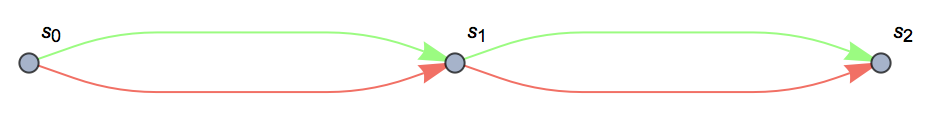}
\caption{Three sites sequentially connected by two kinds of roads}
\label{fig:cities}
\end{figure}

The structure of the example, though very simple, can be adapted to many other situations. 
For instance, we can imagine that $s_0, s_1, s_2$ are different stages of economic development. And the agents are different companies. For each company, at each stage, the market demand for its product may be either Extensive (E) or Normal (N), and the company can choose either to Quickly (Q) expand its producing capacity or to Slowly (S) do so. Naturally, different choices will result in different benefits/costs, e.g.\ if the demand is Extensive, then Quickly expanding will be better than Slowly expanding. The choice at current stage will also influence the benefits of next stage, since more capacity at this stage will also persist to next stage, which will be beneficial if the demand at that
time is Extensive.
Also, in this setting, there is a mean field congestion cost: if too many companies choose
to quickly expand at a certain stage, then resource will be scarce, and the cost will be
higher (e.g.\ higher employee wages).
Other than the specific form of cost function, this application is essentially equivalent to the messenger example proposed above. By solving this problem, we can understand different capacity expansion behaviors for different companies and also how much would be saved if the companies were regulated.

\subsubsection{Path space of types}\label{path-space-of-types}
At the starting site $s_0$, each agent gets a parcel of two possible
types:

\begin{enumerate}
\def\labelenumi{\arabic{enumi}.}
\item[E.] Express type, which is better to be delivered as quickly as possible, otherwise a penalty will be imposed.
\item[N.] Normal type, which can be delivered either quickly or slowly, and no penalty will be imposed based on that.
\end{enumerate}

After delivering the parcel to $s_1$, each agent gets another parcel to be delivered from $s_1$ to $s_2$. The parcel can again be of the two types
described above. 
Therefore, the type space is
$\XX:= \{\text{E}, \text{N}\}$, and the path space of types is $\XX^T=\{\text{E}, \text{N}\}^2$. In accordance with the notation introduced in Section~\ref{sect.npl}, we denote by $x=(x_1,x_2)$ the random process of types on the path space, that has fixed (and known) distribution $\eta$. 

\subsubsection{Path space of actions and cost function}\label{path-space-of-actions}
For delivering parcels between each pair of sites, the agents have two kinds of roads between which to choose: Quick road (Q) and Slow road (S). Therefore, the action space is $\YY:= \{\text{Q}, \text{S}\}$, and the path space of actions is $\YY^T=\{\text{Q}, \text{S}\}^2$. Taking a Slow road is penalized when delivering a parcel of Express type. Specifically, if a messenger's parcel is of Express type and she takes the Slow road, there will be a penalty of $0.5$. Taking the Slow road in the first stage also affects the delivery time of the parcel to be delivered in the second stage. This results in a cost of $0.25$ when taking S in the first stage while having a parcel E in the second stage. In addition, there is a mean-field cost that takes into account congestion, and equals the percentage of agents taking the same road.

The distribution $\nu$ of the action process has support on the four points $\{(Q,Q), (Q,S)$, $(S,Q), (S,S)\}$, and we denote the respective probabilities by $\{\nu_1, \nu_2, \nu_3, \nu_4\}$. 
The cost just described fits into the separable cost framework \eqref{eq:cost}, where the type-action part $f(x,y)$ is described by the matrix:
\begin{center}
\begin{tabular}{ p{0.5cm} | p{0.7cm} p{0.7cm} p{0.7cm} p{0.7cm}}
 & Q,Q  & Q,S & S,Q & S,S\\
 \hline
 E,E & 0 & 0.5 & 0.5 &  1\\
 E,N & 0 & 0 &  0.5 & 0.5\\
 N,E & 0 & 0.5 &  0.25 & 0.75\\
 N,N & 0 & 0 &  0 & 0
\end{tabular}
\end{center}
and the mean-field part of the cost is given by:
\begin{align*}
 V[\nu](y)  := & (2 \nu_1 + \nu_2 + \nu_3) 1_{y = (Q,Q)}  + (\nu_1 + 2\nu_2 + \nu_4) 1_{y = (Q,S)} \\ & +(\nu_1 +  2\nu_3 + \nu_4) 1_{y = (S,Q)}+(\nu_2 +  \nu_3 + 2\nu_4) 1_{y = (S,S)}.
\end{align*}
Note that the functional $V$ defined above is the first variation of the energy functional $\Ecal:\Pcal(\YY^T)\to\R$ given by
\begin{align}\label{eq.e.ex}
\Ecal(\nu) := \nu_1^2 +\nu_2^2 +\nu_3^2 +\nu_4^2+ (\nu_1+\nu_4)(\nu_2+\nu_3).
\end{align}
To see this, fix any measure $\xi\in\Pcal(\YY^T)$ and let $\zeta:=\xi-\nu$. The probabilities $\zeta_i, i=1,\cdots,4$, are defined coherently with previous notation. Then we have
\begin{align*}
& \hspace*{-0.5cm} \lim_{\epsilon\to 0^+}
\frac{\Ecal(\nu+\epsilon \, \zeta)-\Ecal(\nu)}{\epsilon} \\ & \hspace*{0.5cm}= \zeta_1 (2\nu_1 + \nu_2 + \nu_3)+\zeta_2 (\nu_1 + 2\nu_2 + \nu_4)+\zeta_3(\nu_1 + 2\nu_3 + \nu_4)+\zeta_4(\nu_2 + \nu_3+ 2\nu_4)\\
 &\hspace*{0.5cm} = E_\zeta [V[\nu](y)].
\end{align*}

\subsubsection{Static versus dynamic equilibria}\label{ot-cot}

We define the type distribution $\eta$ via a parameter $0 \leq p \leq 1$. At the first step we have $\eta(x_1 = \text{E}) =  \eta(x_1 = \text{N}) = 1/2$, while the transition probabilities for the second step are
\begin{align*}
\eta(x_2 = \text{E} | x_1 = \text{E}) & = \eta(x_2 = \text{N} | x_1 = \text{N})= p \\  \eta(x_2 = \text{N} | x_1 = \text{E}) & = \eta(x_2 = \text{E} | x_1 = \text{N}) = 1-p.
\end{align*}

In the two tables below we report agents' configurations at equilibria (optimal $\pi$'s) in the two cases:
\begin{itemize}
\item[A.] agents know the full type path from the beginning (i.e.\ \emph{static} equilibria, as studied in \cite{BC16});
\item[B.] agents know the type distribution from the beginning, but the actual paths are only revealed progressively in time (i.e.\ \emph{dynamic} equilibria studied in the present paper).
\end{itemize}
We consider several values of $p$, to illustrate the effect of having full versus non-anticipative information. Mathematically, this means that the tables in B are calculated by solving problem \eqref{eq:vp}, namely
\begin{eqnarray*}
\inf_{\nu\in\Pcal(\YY^T)}\big\{\text{COT$(\eta,\nu)$}+\Ecal[\nu]\big\},
\end{eqnarray*}
 whereas those in A are computed by replacing COT with OT, namely 
 \begin{eqnarray*}
\inf_{\nu\in\Pcal(\YY^T)}\big\{\text{OT$(\eta,\nu)$}+\Ecal[\nu]\big\}.
\end{eqnarray*}

\noindent A. \emph{Static Cournot-Nash equilibria for different values of $p$.}
\begin{align*}
\resizebox{1\columnwidth}{!}{
\begin{tabular}{p{0.8cm}|p{0.8cm}p{0.8cm}p{0.8cm}p{0.8cm}}
p=0.1 & Q,Q & Q,S & S,Q & S,S \\
\hline
E,E &  0.048 &  0.001 &  0.000 &  0.000 \\
E,N &  0.073 &  0.330 &  0.008 &  0.038 \\
N,E &  0.181 &  0.006 &  0.256 &  0.008 \\
N,N &  0.000 &  0.002 &  0.009 &  0.039
\end{tabular}\quad\begin{tabular}{p{0.8cm}|p{0.8cm}p{0.8cm}p{0.8cm}p{0.8cm}}
p=0.5 & Q,Q & Q,S & S,Q & S,S \\
\hline
E,E &  0.241 &  0.007 &  0.002 &  0.000 \\
E,N &  0.042 &  0.189 &  0.003 &  0.015 \\
N,E &  0.122 &  0.004 &  0.121 &  0.004 \\
N,N &  0.003 &  0.016 &  0.042 &  0.189
\end{tabular}\quad\begin{tabular}{p{0.8cm}|p{0.8cm}p{0.8cm}p{0.8cm}p{0.8cm}}
p=0.9 & Q,Q & Q,S & S,Q & S,S \\
\hline
E,E &  0.435 &  0.013 &  0.002 &  0.000 \\
E,N &  0.009 &  0.039 &  0.000 &  0.002 \\
N,E &  0.032 &  0.001 &  0.017 &  0.001 \\
N,N &  0.011 &  0.051 &  0.070 &  0.318
\end{tabular}}
\end{align*}
\vspace*{0.05cm}

\noindent B. \emph{Dynamic Cournot-Nash equilibria for different values of $p$.}
\begin{align*}
\resizebox{1\columnwidth}{!}{
\begin{tabular}{p{0.8cm}|p{0.8cm}p{0.8cm}p{0.8cm}p{0.8cm}}
p=0.1 & Q,Q & Q,S & S,Q & S,S \\
\hline
E,E &  0.045 &  0.001 &  0.004 &  0.000 \\
E,N &  0.075 &  0.339 &  0.007 &  0.030 \\
N,E &  0.158 &  0.005 &  0.279 &  0.008 \\
N,N &  0.003 &  0.015 &  0.006 &  0.026
\end{tabular}\quad\begin{tabular}{p{0.8cm}|p{0.8cm}p{0.8cm}p{0.8cm}p{0.8cm}}
p=0.5 & Q,Q & Q,S & S,Q & S,S \\
\hline
E,E &  0.238 &  0.007 &  0.005 &  0.000 \\
E,N &  0.044 &  0.201 &  0.001 & 0.004 \\
N,E &  0.061 &  0.002 & 0.181 &  0.006 \\
N,N &  0.011 &  0.052 &  0.034 &  0.153
\end{tabular}\quad\begin{tabular}{p{0.8cm}|p{0.8cm}p{0.8cm}p{0.8cm}p{0.8cm}}
p=0.9 & Q,Q & Q,S & S,Q & S,S \\
\hline
E,E &  0.435 &  0.013 &  0.002 &  0.000 \\
E,N &  0.009 &  0.041 &  0.000 &  0.000 \\
N,E &  0.009 &  0.000 &  0.040 &  0.001 \\
N,N &  0.015 &  0.066 &  0.066 &  0.302
\end{tabular}}
\end{align*}
\vspace*{0.05cm}

As expected, knowing future types affects agents' choice in previous times as well. To wit, an agent that at the first stage already knows that she will get a parcel E at the next stage (rows 1 and 3, table A) will have more incentive to take the Q road in the first stage, as compared to the case where she only knows the current type (table B).
To see this phenomenon more clearly, in Figure~\ref{fig:bc}-l.h.s. we illustrate the different behaviour of agents of type (E,E) and (N,E) in the static and dynamic case, as function of the parameter $p$. Obviously in the two extreme situations $p\in\{0,1\}$, the knowledge of the distribution $\eta$ already reveals the full type path from the outset of the game, and thus static and dynamic equilibria coincide. The farther we are from these trivial situations, the more difference of information there is between static and dynamic case, hence the bigger the difference in the behaviour of agents at equilibrium. An analogous effect is seen in Figure~\ref{fig:bc}-r.h.s., where we report the (relative) difference between total cost in the static and dynamic equilibria.

\begin{figure}[H]
\centering
\includegraphics[scale=0.3]{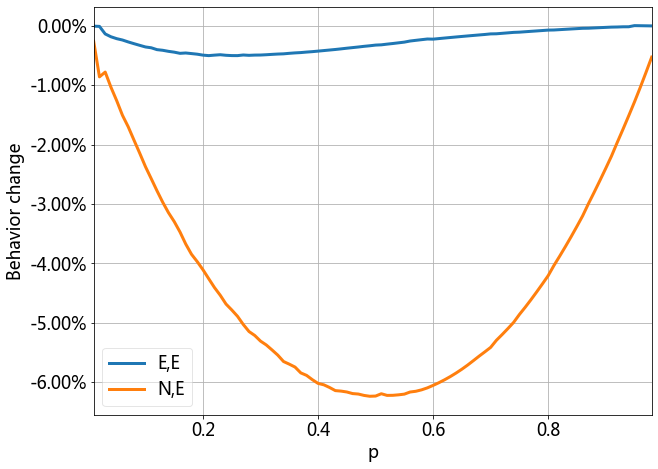}\hspace*{1cm}
\includegraphics[scale=0.3]{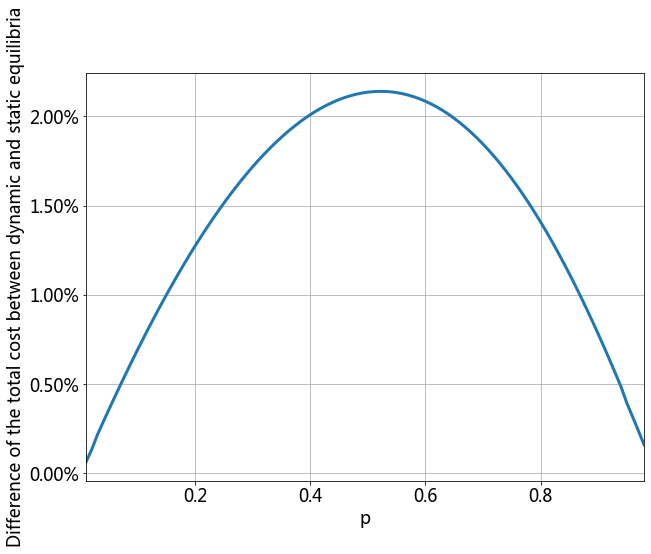}
\caption{L.h.s.: Change in the probability of agents (E,E) taking Q in the first stage, and of agents (N,E) taking Q in the first stage, when comparing static and dynamic equilibria. R.h.s.: Difference of the total cost between static and dynamic equilibria.}
\label{fig:bc}
\end{figure}

Figure~\ref{fig:bct} reports the probability of taking the Q road at stage 1 in the dynamic Cournot-Nash equilibria. Note that $p=0$ corresponds to the case where all agents switch parcel type from stage 1 to stage 2, while $p=1$ corresponds to the case of all agents getting in stage 2 the same parcel type they get in stage 1. Clearly, in the first case all agents have some incentive to take the Q road at stage 1, either because they have a parcel E at that stage, or because they will have it at the next stage, since in either case taking the S road would involve a penalty. On the other hand, in the second case, only half of the agents, the (E,E) type, have such an incentive. This observation gives the right intuition about the behaviour of agents at equilibrium, as we see from the monotonicity illustrated in Figure~\ref{fig:bct}.

\begin{figure}[H]
\centering
\includegraphics[scale=0.3]{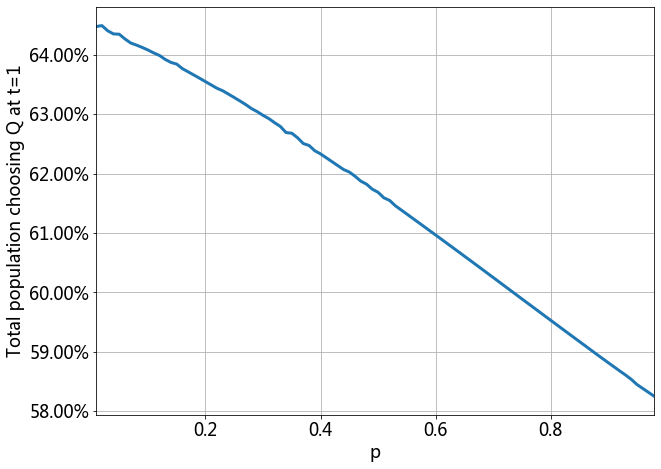}
\caption{Probability of agents taking the Q road in the first stage in dynamic Cournot-Nash equilibria.}
\label{fig:bct}
\end{figure}

\subsubsection{Price of Anarchy}\label{price-of-anarchy}
  
Let the type distribution $\eta$ be as in Section~\ref{ot-cot}. {Note that, since the functional $\Ecal$ in \eqref{eq.e.ex} is strictly convex, by Corollary~\ref{cor unique} the distribution $\nu^*$ of the optimal actions is unique, and so is the total cost associated to competitive equilibria. The latter equals  $TC[\nu^*]=\text{COT}(\eta,\nu^*)+\int V[\nu^*]d\nu^*$, and is the numerator of the PoA in \eqref{eq.poa}.
}
In Figure~\ref{fig:poa} we plot the Price of Anarchy against $p$. We notice that the smaller $p$ is, that is, the higher the probability of switching parcel type between stage 1 and 2, the higher the PoA, thus, the higher the difference between competitive and cooperative equilibria. The rationale here is that congestion at the first stage increases when $p$ becomes smaller, since in that case there is a stronger incentive to take the Quick road.

\begin{figure}[H]
\centering
\includegraphics[scale=0.3]{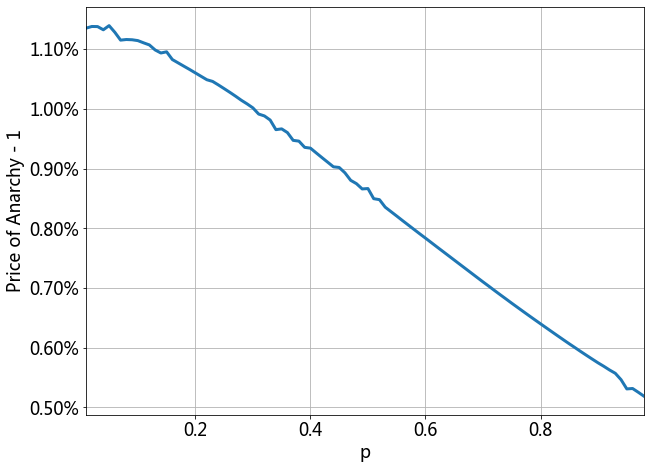}
\caption{Price of Anarchy as a function of $p$.}
\label{fig:poa}
\end{figure}

\subsection{On closed-form solutions: The Knothe-Rosenblatt case}

We will now consider a class of cost functions in discrete time for which we can characterize the optimal transport problem inside \eqref{eq:vp}, and thus mixed-strategy equilibria too, thanks to Theorem~\ref{thm:eqv}.  We denote by $F_{\mu_1}$ the cumulative distribution of $p^1_*\mu$ (with $p^1$ projection onto the first coordinate), and by $F_{\mu^{z_1,\dots,z_{t-1}}}$ the cumulative distribution  of the $t$-marginal of $\mu$ given the first $t-1$ ones, whenever $\mu$ is a measure in multiple dimensions. The increasing $T$-dimensional \emph{Knothe-Rosenblatt} rearrangement\footnote{ {The reader might know it by the name \emph{quantile transform} or \emph{Knothe-Rosenblatt coupling}.}} of $\eta$ and $\nu$ is defined as the law of the random vector $(X_1^*,\dots,X_T^*, Y_1^*,\dots,Y_T^*)$ where
\begin{align}\label{quantile transforms}\textstyle
& X_1^* = F_{\eta_1}^{-1} (U_1),\hspace{46pt}  Y_1^* = F_{\nu_1}^{-1} (U_1), \hspace{31pt}\mbox{ and inductively }\\ \nonumber
& X_t^* = F_{\eta^{X_1^*,\dots,X_{t-1}^*}}^{-1} (U_t),\quad Y_t^* = F_{\nu^{Y_1^*,\dots,Y_{t-1}^*}}^{-1} (U_t),\,\,\,\, \text{for } t=2,\dots,T,
\end{align}
for $U_1,\dots,U_T$ independent and uniformly distributed random variables on $[0,1]$. Additionally, if $\eta$-a.s.\ all the conditional distributions of $\eta$ are atomless (e.g.\ if $\eta$ has a density), then this rearrangement is induced by the (Monge) map
$$\textstyle (x_1,\dots,x_T)\mapsto A(x_1,\dots,x_T):=(A^1(x_1),A^2(x_2;x_1),\dots,A^T(x_T; x_1, \dots. x_{T-1})),$$
where $A^1(x_1):=  F_{\nu_1}^{-1}\circ F_{\eta_1}(x_1)$\ and
\begin{equation*}
\textstyle
A^t(x_t; x_1,\dots,x_{t-1}) :=\,\, F_{\nu^{A^1(x_1),\dots,A^{t-1}(x_{t-1};x_1,\dots,x_{t-2})}}^{-1}\circ F_{\eta^{x_1,\dots,x_{t-1}}}(x_t),\quad t\geq 2. 
\end{equation*}

In the proposition below we use the notation $\Delta x_t=x_t-x_{t-1}$ and $\Delta y_t=y_t-y_{t-1}$.
\begin{prop}
Assume that one of the following conditions is satisfied:
\begin{itemize}
\item[(a)] $\eta$ has independent marginals, and $f(x,y)=\sum_{t=1}^Tf_t(x_t,y_1,...,y_t)$ is such that, for all $y_1,...,y_{t-1}$, $k_t(u,z):=f_t(u,y_1,...,y_{t-1},z)$ satisfies the Spence-Mirrlees condition $\partial_{uz}k_t(u,z)<0$;
\item[(b)] $\eta$ has independent increments, and $f(x,y)=f_1(x_1,y_1)+\sum_{t=2}^Tf_t(\Delta x_t-\Delta y_t)$, with $f_t$ convex.
\end{itemize}
Then Cournot-Nash equilibria (if they exist) are determined by the second marginal, and precisely given by the Knothe-Rosenblatt rearrangement.
Moreover, if $\eta$ has a density, all Cournot-Nash equilibria are in fact pure (and given by the Knothe-Rosenblatt map).
\end{prop}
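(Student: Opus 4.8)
The plan is to reduce the statement to a characterization of the optimizer of the causal optimal transport problem $\text{COT}(\eta,\nu)$ in \eqref{eq:cotf}, and then to solve the latter by backward induction along the canonical filtration. First I would use the fact, recorded in Remark~\ref{rem fp} and the discussion preceding it, that any dynamic Cournot-Nash equilibrium $\widehat\pi$ with $\YY^T$-marginal $\widehat\nu$ solves $\text{COT}(\eta,\widehat\nu)$: for the separable cost \eqref{eq:cost} the mean-field term $\int V[\widehat\nu]\,d\nu$ is constant over $\Pi_c(\eta,\widehat\nu)$, so minimizing $F(\cdot,\cdot,\widehat\nu)$ over causal plans with second marginal $\widehat\nu$ reduces to minimizing $\int f\,d\pi$. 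Hence it suffices to prove that, under (a) or (b), the unique optimizer of $\text{COT}(\eta,\nu)$ is the Knothe-Rosenblatt rearrangement \eqref{quantile transforms}; since this depends only on $(\eta,\nu)$, it follows at once that equilibria are determined by their second marginal and have the stated explicit form.

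The core is a dynamic programming scheme in which, at each stage $t$, one optimizes the coupling of the current type-coordinate with the current action-coordinate given the past, and propagates the value backward from $t=T$. The decisive structural fact is that the optimal cost-to-go $W_{t+1}$ depends on the past \emph{only through the already-chosen actions} and is free of the current type-increment. In case (a), with $f(x,y)=\sum_t f_t(x_t,y_1,\dots,y_t)$, the tail $\sum_{s>t}f_s(x_s,y_1,\dots,y_s)$ becomes, after integrating out $x_{t+1},\dots,x_T$, independent of $x_t$ precisely because the marginals of $\eta$ are independent. In case (b) I would first pass to the increment variables $u_s=\Delta x_s$, $v_s=\Delta y_s$; then the increments of $\eta$ are independent and the cost splits as $f_1(x_1,y_1)+\sum_{s\geq 2}f_s(u_s-v_s)$ into terms each involving a single pair $(u_s,v_s)$, so the tail is free of $u_t$.

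Consequently the effective cost steering stage $t$ adds to the one-step cost a term free of the current type-increment, whence its mixed second derivative in the relevant pair of coordinates equals that of the one-step cost: it is $\partial_{uz}k_t<0$ in $(x_t,y_t)$ by the Spence-Mirrlees condition in (a), and $-f_t''(u_t-v_t)\leq 0$ in $(u_t,v_t)$ by convexity in (b). For prescribed one-dimensional conditionals a submodular cost is minimized exactly by the increasing coupling, uniquely so when the submodularity is strict; coupling the two coordinates through a shared uniform $U_t$ while retaining the conditionals dictated by $\eta$ and $\nu$ is precisely the inductive recipe \eqref{quantile transforms}. In case (b) one checks that conditioning on $(v_s)_{s<t}$ is the same as conditioning on $(y_s)_{s<t}$ and that the increment-quantile differs from the path-quantile only by the deterministic shift $y_{t-1}^*$, so the increment rearrangement coincides with the Knothe-Rosenblatt rearrangement of $\eta,\nu$ in the original path variables. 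Iterating over $t$ therefore identifies the optimizer of $\text{COT}(\eta,\nu)$ with \eqref{quantile transforms}. When moreover $\eta$ has a density, every conditional $\eta^{x_1,\dots,x_{t-1}}$ is atomless, the quantile functions are genuine $\eta$-a.s.\ maps, and the comonotone couplings are induced by the map $A$ displayed before the proposition, so the equilibrium is pure.

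The step I expect to be the main obstacle is to make this scheme rigorous against the \emph{global} second-marginal constraint: one must ensure that optimizing stage by stage, while holding the conditionals $\nu^{y_1,\dots,y_{t-1}}$ fixed so that the reconstructed action law is exactly $\nu$, neither sacrifices global optimality nor destroys causality. I would realize this by successive rearrangement. Starting from an arbitrary $\pi\in\Pi_c(\eta,\nu)$, I replace its time-$T$ kernel by the comonotone one (which lowers $\int f\,d\pi$ by submodularity of $f_T$ and leaves every earlier summand untouched), and then repeat at $t=T-1,\dots,1$, using at each stage that the already optimized tail contributes a term free of the current type-increment, so that submodularity of the one-step cost alone guarantees the improvement. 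The technical crux is to verify that each such replacement preserves both marginals and the causality of the plan; the independence built into $\eta$ is exactly what furnishes the required freeness of the tail and hence the monotonicity of the cost under the rearrangement.
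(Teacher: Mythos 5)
Your first step is exactly the paper's: by Remark~\ref{rem fp} (equivalently, the easy half of the implication (i)$\Rightarrow$(ii) in Theorem~\ref{thm:eqv}, which indeed does not require convexity of $\Ecal$), any equilibrium $\widehat\pi$ with second marginal $\widehat\nu$ must solve $\text{COT}(\eta,\widehat\nu)$ for the cost $f$, so everything reduces to identifying the optimizer of \eqref{eq:cotf}. Where you genuinely diverge is in the second step: the paper's proof is essentially a citation --- it invokes Theorem~2.7 and Corollary~2.8 of \cite{BBLZ}, which state that under (a) or (b) the causal problem $\text{COT}(\eta,\nu)$ has a unique solution given by the Knothe--Rosenblatt rearrangement \eqref{quantile transforms} (the KR map when $\eta$ has a density) --- whereas you set out to re-derive that result by a backward sweep of conditional comonotone rearrangements. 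Your scheme mirrors the strategy behind the cited theorem (stagewise one-dimensional submodular problems, decoupled by the independence structure of $\eta$, with causality supplying the conditional law of $x_t$ given the past), so what your route buys is self-containedness at the cost of re-proving a known result; it is a legitimate alternative provided the sweep is fully executed.

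As written, however, the sweep has two concrete loose ends. First, in a general causal plan the conditional law of $y_t$ given $(x_1,\dots,x_{t-1},y_1,\dots,y_{t-1})$ may depend on the past \emph{types} and not only on $y_1,\dots,y_{t-1}$; your comonotone replacement preserves these conditionals, so it does not by itself produce the KR conditionals $\nu^{y_1,\dots,y_{t-1}}$ appearing in \eqref{quantile transforms}. One needs an additional averaging step --- integrate out $x_1,\dots,x_{t-1}$ given $y_1,\dots,y_{t-1}$, which is cost-neutral exactly because $x_t$ is independent of the past and the already-rearranged cost-to-go $W_{t+1}$ depends on the past only through actions --- and one must then re-verify that both marginals and all causality constraints at times $s\neq t$ survive the modification of the joint law of $(x_1,\dots,x_t,y_1,\dots,y_t)$. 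You honestly flag this as the crux, but flagging it is not resolving it. Second, in case (b) convexity of $f_t$ yields only $-f_t''\le 0$, i.e.\ weak submodularity, so the stagewise comonotone coupling need not be the \emph{unique} minimizer; yet the conclusion that equilibria are determined by their second marginal requires uniqueness of the $\text{COT}$ optimizer. Your parenthetical ``uniquely so when the submodularity is strict'' leaves this unreconciled with the stated claim: at this point you must either assume strict convexity or fall back on the finer uniqueness argument of \cite{BBLZ}, which is precisely what the paper's citation supplies.
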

\begin{proof}
Under condition (a) or (b), Theorem~2.7 and Corollary~2.8 in [BBLZ] imply that, for any $\nu\in\Pcal(\YY^T)$, the causal transport problem COT$(\eta,\nu)$ admits a unique solution which is given by the Knothe-Rosenblatt rearrangement, and that if $\eta$ has a density, then the unique solution to COT$(\eta,\nu)$ is given by the Knothe-Rosenblatt map. 
Theorem~\ref{thm:eqv} above concludes.
\end{proof}

\section{Appendix}
\subsection{Auxiliary results and profs}

\begin{lem}\label{lem compactness}
Let $B\subseteq \Pcal(\YY^T)$ be a weakly compact set of measures, and $\eta\in \Pcal(\XX^T)$ be given. Then the set $\Pi_c(\eta,B):=\cup_{\nu\in B}\Pi_c(\eta,\nu)$ is weakly compact.
\end{lem}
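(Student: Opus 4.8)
The plan is to establish weak compactness of $\Pi_c(\eta,B)$ by combining relative compactness (tightness) with weak closedness, invoking Prokhorov's theorem together with the fact that the weak topology on $\Pcal(\XX^T\times\YY^T)$ is metrizable, so that compactness and sequential compactness coincide. The two continuity issues one has to confront are the preservation of the marginal constraints and, more delicately, the preservation of causality under weak limits.

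First I would show that the larger set $\Pi(\eta,B):=\cup_{\nu\in B}\Pi(\eta,\nu)$ of all (not necessarily causal) couplings is tight. Since $B$ is weakly compact it is tight, so for $\epsilon>0$ there is a compact $K_\YY\subseteq\YY^T$ with $\sup_{\nu\in B}\nu(K_\YY^c)<\epsilon/2$; the single measure $\eta$ is tight, giving a compact $K_\XX\subseteq\XX^T$ with $\eta(K_\XX^c)<\epsilon/2$. Any $\pi\in\Pi(\eta,B)$ has marginals $\eta$ and some $\nu\in B$, whence $\pi((K_\XX\times K_\YY)^c)\le\eta(K_\XX^c)+\nu(K_\YY^c)<\epsilon$. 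Thus $\Pi(\eta,B)$, and a fortiori $\Pi_c(\eta,B)$, is tight and hence relatively weakly compact.

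Next I would prove that $\Pi_c(\eta,B)$ is weakly closed. Take $\pi_n\in\Pi_c(\eta,\nu_n)$ with $\nu_n\in B$ and $\pi_n\to\pi$ weakly. Continuity of the marginal projections gives that the first marginal of $\pi$ is $\eta$; passing to a subsequence, $\nu_n\to\nu\in B$ by compactness of $B$, and the second marginal of $\pi$ equals $\nu$, so $\pi\in\Pi(\eta,\nu)$ with $\nu\in B$. The crux is to show that $\pi$ is causal, which I would do through characterization~(3) of Proposition~\ref{equivalence-proposition}: it suffices to prove that for every $t$, $h\in C_b(\YY^t)$ and $g\in C_b(\XX^T)$ the integral in \eqref{dual-constraint-original} against $\pi$ vanishes, knowing it vanishes against each $\pi_n$. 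Writing $\bar g(x):=\int_{\XX^{T-t}}g(x_1,\dots,x_t,\bar x_{t+1},\dots,\bar x_T)\,\eta^{x_1,\dots,x_t}(d\bar x_{t+1},\dots,d\bar x_T)$, the integrand is $h(y)[g(x)-\bar g(x)]$; the part $h(y)g(x)$ is bounded continuous, so its $\pi_n$-integrals converge to the $\pi$-integral, and the entire difficulty concentrates on the term $\int h(y)\bar g(x)\,\pi_n(dx,dy)$, since the conditional expectation $\bar g$ is only measurable, not continuous.

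This discontinuity is the main obstacle, and I would resolve it by exploiting that all the couplings share the \emph{same} fixed first marginal $\eta$. By Lusin's theorem, for any $\delta>0$ there is a compact $K\subseteq\XX^T$ with $\eta(K^c)<\delta$ on which $\bar g$ is continuous, and by Tietze's extension theorem I would choose $\tilde g\in C_b(\XX^T)$ with $\|\tilde g\|_\infty\le\|\bar g\|_\infty$ and $\tilde g=\bar g$ on $K$. Since every $\pi_m$ (and $\pi$) has $x$-marginal $\eta$, one has $\pi_m(K^c\times\YY^T)=\eta(K^c)<\delta$, so $|\int h(y)(\bar g-\tilde g)\,d\pi_m|\le 2\|h\|_\infty\|\bar g\|_\infty\delta$ uniformly in $m$, and the identical bound holds for $\pi$. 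As $h(y)\tilde g(x)$ is bounded continuous, $\int h\tilde g\,d\pi_n\to\int h\tilde g\,d\pi$; letting $n\to\infty$ and then $\delta\to0$ yields $\int h(y)\bar g(x)\,d\pi_n\to\int h(y)\bar g(x)\,d\pi$. Hence the integral in \eqref{dual-constraint-original} against $\pi$ is the limit of those against $\pi_n$, all equal to zero, so $\pi$ satisfies~(3) of Proposition~\ref{equivalence-proposition} and is therefore causal. This shows $\pi\in\Pi_c(\eta,B)$, establishing closedness; combined with tightness and metrizability of the weak topology, $\Pi_c(\eta,B)$ is weakly (sequentially) compact.
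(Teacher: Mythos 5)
Your proof is correct, but it takes a genuinely different route from the paper's. Both arguments reduce causality to the test-function characterization in item~3 of Proposition~\ref{equivalence-proposition}, and the obstacle in both is the same: the conditional expectation $\bar g(x)=\int g(x_1,\dots,x_t,\bar x_{t+1},\dots,\bar x_T)\,\eta^{x_1,\dots,x_t}(d\bar x_{t+1},\dots,d\bar x_T)$ is merely Borel measurable, so the defining integrals in \eqref{dual-constraint-original} are not a priori stable under weak limits. The paper resolves this globally: invoking Kechris' Theorem~13.11, it replaces the topology on $\XX^T$ by a finer Polish topology $\hat\tau$ with the same Borel sets under which the kernels $x\mapsto\eta^{x_1,\dots,x_t}$ (hence all the test functions) become continuous; then $\Pi_c(\eta,B)$ is closed in the correspondingly finer weak topology $\Sigma_2$, still tight because $\eta$ remains a tight Borel measure for $\hat\tau$, hence $\Sigma_2$-compact and a fortiori compact in the original topology. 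You instead stay in the original topology and repair the discontinuity of $\bar g$ pointwise, by Lusin plus Tietze, with the key observation that the fixed first marginal $\eta$ makes the error $|\int h(\bar g-\tilde g)\,d\pi_m|\le 2\|h\|_\infty\|\bar g\|_\infty\,\eta(K^c)$ uniform along the whole sequence and at the limit; a $3\epsilon$ estimate then passes \eqref{dual-constraint-original} to the limit, and metrizability of the weak topology lets you argue sequentially. Your approach is more elementary and self-contained (no change-of-topology machinery, only Lusin, Tietze and Prokhorov), and it isolates transparently why the shared marginal $\eta$ is what makes the causality constraint weakly closed. The paper's approach buys more than the lemma itself: the $\Sigma_2$-compactness and the continuity of the classes $\SS$ and $\FF$ after re-topologization are reused verbatim in the proof of Theorem~\ref{thm_ex_dua}, where Sion's minimax theorem requires continuity of the dual test functions, something your Lusin--Tietze argument would not directly provide there. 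One small point to make explicit if you write this up: weak compactness of $B$ in the (Hausdorff, metrizable) weak topology gives closedness of $B$, which is what guarantees the limiting second marginal lies in $B$; your subsequence phrasing is fine but this is the cleaner justification.
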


\begin{proof}
Call $\tau$ and $\sigma$ the Polish topologies of $\XX^T$ and $\YY^T$, respectively. Consider 
\begin{align*}
\XX \ni x_1 & \mapsto \eta^{x_1}(dx_2,\dots,dx_T)\in \Pcal(\XX^{T-1})\\
\XX^2 \ni (x_1,x_2) & \mapsto \eta^{x_1,x_2}(dx_3,\dots,dx_T)\in \Pcal(\XX^{T-2})\\
& \vdots \\
\XX^{T-1} \ni (x_1,\dots,x_{T-1}) & \mapsto \eta^{x_1,\dots,x_{T-1}}(dx_T)\in \Pcal(\XX),
\end{align*}
for (some) regular conditional distributions of $\eta$. We can view the collection of these $T-1$ measurable mappings
as a measurable function from $\XX^T$ into a Polish space. By \cite[Theorem 13.11]{Ke95}, there is a stronger Polish topology on $\XX^T$, which we call $\hat\tau$, whose Borel sets are the same as for $\tau$, and such that the above mapping is continuous when the domain space $\XX^T$ is given the $\hat\tau$ topology.  
Let us denote by $\Sigma_1$ the topology on $\Pcal(\XX^T\times\YY^T)$ generated by convergence w.r.t.\ $\tau\times\sigma$-continuous bounded functions, and $\Sigma_2$ the topology generated by convergence w.r.t.\ $\hat\tau\times\sigma$-continuous bounded functions. 
By \cite[Proposition 2.4]{BBLZ} we know that causality can be tested by integration against functions of the form
$$h(y_1,\dots,y_t)\left [ g(x_1,\dots,x_T)- \int g(x_1,\dots,x_t,\bar x_{t+1},\dots,\bar x_T)\eta^{x_1,\dots,x_t}(d\bar x_{t+1},\dots,d\bar x_T) \right ],$$
for each $t$, $h$ bounded $\sigma$-continuous and $g$ bounded $\tau$-continuous. Notice that the function in brackets is then by definition also $\hat\tau$-continuous, so the overall expression is $\hat\tau\times\sigma$-continuous. It follows that $\Pi_c(\eta,B)$ is $\Sigma_2$-closed. On the other hand, $\Pi_c(\eta,B)$ is also $\Sigma_2$-tight, since as a Borel measure $\eta$ is still tight w.r.t.\ the stronger topology induced by $\hat\tau$-continuous bounded functions. Thus $\Pi_c(\eta,B)$ is $\Sigma_2$-compact and in particular also $\Sigma_1$-compact.  
\end{proof}

\begin{proof}[Proof of Theorem \ref{thm_ex_dua}]
The existence follows from Lemma \ref{lem compactness}, since $\Pi_c(\eta,\nu)$ is weakly compact, and the functional $\pi\mapsto \int fd\pi$ is lower semicontinuous. For the duality, me may first stregthen the topology on $\XX^T$, just as we did in the proof of Lemma \ref{lem compactness}, so guaranteeing that the conditional distributions $\{\eta^{x_1,\dots,x_t}\}_t$ are continuous. Doing so shows that each $F\in\FF$ is continuous after strengthening the topology. Similarly, the proof of \cite[Proposition 2.4]{BBLZ} reveals that the martingales $\{M_t\}_t$ are determined by the conditional distributions $\{\eta^{x_1,\dots,x_t}\}_t$, and so each $S\in\SS$ can be assumed continuous after strengthening the topology. Crucially, since $\eta$ remains a Borel measure after this strengthening of topology, the set $\Pi(\eta,\nu)$ is still compact after we accordingly strengthen the weak topology on $\Pcal(\XX^T\times\YY^T)$. This proves that
\begin{align*}\text{COT}(\eta,\nu)&= \inf_{\pi\in \Pi(\eta,\nu)}\sup_{S\in\SS} \int(f+S)d\pi  = \sup_{S\in\SS}\inf_{\pi\in \Pi(\eta,\nu)} \int(f+S)d\pi\\& = \inf_{\pi\in \Pi(\eta,\nu)} \sup_{F\in\FF} \int(f+F)d\pi = \sup_{F\in\FF}\inf_{\pi\in \Pi(\eta,\nu)} \int(f+F)d\pi, 
\end{align*}
by Proposition \ref{equivalence-proposition} and a familiar application of Sion's minimax theorem. The remaining identities are obtained by applying Kantorovich duality (cf.\ \cite{Villani}) to $\inf_{\pi\in \Pi(\eta,\nu)} \int(f+S)d\pi $ and $\inf_{\pi\in \Pi(\eta,\nu)} \int(f+F)d\pi$.
\end{proof}

\begin{lem}
\label{non-linear-duality}
If $f$ is lower bounded and lower semicontinuous, then
$$ \inf_{\pi\in\Pi_c(\eta,\nu)} \left\{{\textstyle \int f\ d\pi} - \epsilon\, \text{Ent}(\pi) \right\} = \sup_{S\in\SS}\, \text{OT}^{\epsilon,S}  .$$
\end{lem}

\begin{proof}
Very similar to the proof of Theorem \ref{thm_ex_dua}, where Sion's minimax theorem is invoked. We only have to check that $-\text{Ent}(\cdot)$ is convex and lower semicontinuous. Letting $g(x):=x\,\log(x)$, which is a convex continuous function on $[0,\infty)$, this immediately implies that $\pi\mapsto -\text{Ent}(\pi)= \sum g(\pi_{i,k})$ is convex and continuous.
\end{proof}

\subsection{Basis for $\SS$}\label{app.basis}
We first get a better understanding of $x$-adapted and $y$-adapted processes, as introduced at the beginning of Section~\ref{sect.num}, in the finite setting of Assumption~\ref{ass.fin}, where $|\XX|=n$ and $|\YY|=m$.
Consider the case where we have three-steps, $T=3$, and two possible actions, say $\YY=\{a,b\}$. Then the path space of actions $\YY^T$ can be represented with the tree in Figure~\ref{fig:tree}.

\begin{figure}[H]
\centering
\includegraphics[scale=0.3]{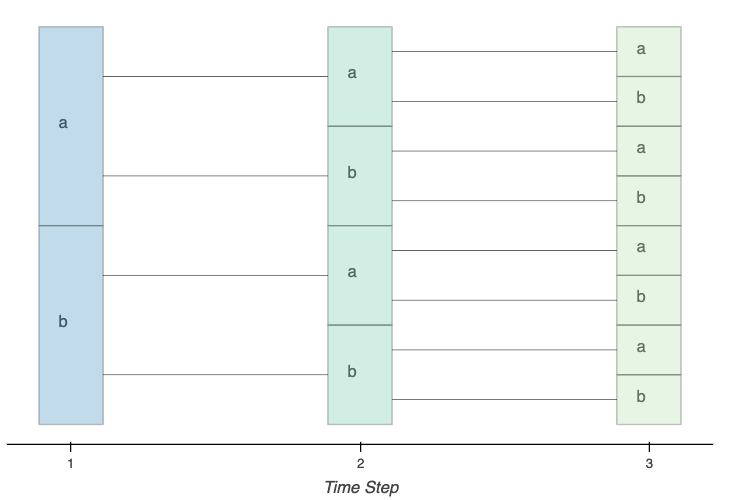}
\caption{Tree representation of the path space of actions}
\label{fig:tree}
\end{figure}

We call `node' every box of the tree, and `leaf' any node at the terminal time $T$. Now, any $y$-adapted process can be represented with a tree with same structure, while having any possible values in the nodes. Clearly, a basis for such processes can be given by assigning the value 1 to any of the nodes of the tree and 0 to all other nodes. This means that $y$-adapted processes can be identified with elements in $\RR^{\#\text{nodes}}$. In general, for any number of steps $T\in\{2,3,\ldots\}$ and any number of actions $m$, $y$-adapted processes are identified with vectors in $\RR^{\#\text{nodes}}=\RR^{N_{\YY}}$, with $N_{\YY}=\sum_{t=1}^Tm^t$. 

Analogously, we can represent the path space of types $\XX^T$ with a tree, which defines the structure of all $x$-adapted processes. Now, to build the set $\SS$, we are only interested in $x$-adapted processes that are $\eta$-martingales. For these processes, knowing the values at terminal time is enough, since values at previous times are then determined by backward recursion thanks to the martingale property. A basis for such processes can therefore be given by assigning 1 to any of the leaves of the tree representing $\XX^T$, and 0 to all other leaves. $x$-adapted $\eta$-martingales can therefore be identified with vectors in $\RR^{\#\text{leaves}}=\RR^{n^T}$.

Finally, $\SS$ is generated by the finite basis of elements of the form $\sum_{t<T}h^i_t(M^j_{t+1}-M^j_t)$, where $\{h^i\}_{i=1}^{N_{\YY}}$ is a basis for the $y$-adapted processes and $\{M^j\}_{j=1}^{n^T}$ is a basis for the $x$-adapted $\eta$-martingales. We denote such a basis for $\SS$ by $\{e_k\}_{k=1}^K$, where $K=n^T\cdot N_{\YY}$.

\bibliographystyle{plain}
\bibliography{biblio_CN2}

\end{document}